\documentclass[a4, 11pt]{amsart}
\usepackage[active]{srcltx}
\usepackage{amsmath,amssymb,amscd,amsthm,graphicx,enumerate}

\usepackage{hyperref}
\hypersetup{breaklinks=true}
\usepackage[utf8]{inputenc}
\usepackage[T1]{fontenc}
\usepackage[all]{xy}
\usepackage{subfig}
\usepackage{xcolor}
\definecolor{ultrablue}{rgb}{0.0,0.0, 1}
\usepackage{hyperref}
\hypersetup{
linktoc=page,
colorlinks,
linkcolor={ultrablue},
citecolor={ultrablue},
urlcolor={ultrablue}
}
\usepackage{mathrsfs}
\usepackage{amsmath,amssymb,amsthm,a4wide}
\usepackage{enumerate}
\usepackage{color}
\usepackage{soul}
\usepackage{graphicx}

\setcounter{tocdepth}{1}

\makeatletter 
\def\l@subsection{\@tocline{2}{0pt}{3pc}{6pc}{}}
 \makeatother


\usepackage{tikz}
  \usetikzlibrary{calc,spy}
\usetikzlibrary{shapes.geometric, arrows}
\usepackage{pgfplots}
\usetikzlibrary{intersections, pgfplots.fillbetween}

\setlength{\parskip}{0.27ex}

%


\theoremstyle{plain}

\newtheorem*{theorem*}{Theorem}

\newtheorem{lemma}{Lemma}[section]
\newtheorem{theorem}[lemma]{Theorem}
\newtheorem{proposition}[lemma]{Proposition}

\theoremstyle{definition}

\newtheorem{definition}[lemma]{Definition}
\newtheorem{remark}[lemma]{Remark}


\newenvironment{customthm}[1]
{\innercustomthm}
{\endinnercustomthm}

\theoremstyle{remark}

\newtheorem*{ack}{Acknowledgements}

\numberwithin{equation}{section}

\newcommand{\ds}{\displaystyle}

\newcommand{\del}{{\partial}}





\newcommand{\sphere}{\mathrm{\mathbb{S}}}

\newcommand{\R}{\mathbb{R}}

\newcommand{\SO}{\mathsf{SO}}

\newcommand{\dvol}{d\mathrm{vol}}
\newcommand{\dsigma}{{d\sigma}}

\newcommand{\n}{\textbf{n}}

\newcommand{\id}{\mathrm{id}}
\newcommand{\pr}{\textrm{pr}}

\newcommand{\AK}{\mathcal{AK}}

\newcommand{\MM}{{M_1 \sqcup M_2}}

\DeclareMathOperator{\Rm}{Rm}
\DeclareMathOperator{\Ric}{Ric}

\DeclareMathOperator{\diam}{diam}

\DeclareMathOperator{\supp}{supp}

\DeclareMathOperator{\vol}{vol}

\numberwithin{equation}{section}

\begin{document}


\title[Weak Super Ricci Flow]{On Weak Super Ricci Flow through Neckpinch}


\address{  Sajjad Lakzian\\ \newline \phantom{S} School of Mathematics\\ Institute for Research in Fundamental Sciences (IPM)\\P.O. Box 19395-5746, Tehran, Iran }
\email{\href{mailto:slakzian@ipm.ir}{slakzian@ipm.ir}}

\address{Department of Mathematical Sciences\\Isfahan University of  Technology (IUT) \\Isfahan 8415683111, Iran }
\email{\href{mailto:slakzian@ipm.ir}{slakzian@ipm.ir} \\ \vskip 0.8cm}

\address{Michael Munn \\  \newline    \phantom{S}  Google, Greater New York City Area, USA}
\email{\href{mailto:mikemunn@gmail.com}{mikemunn@gmail.com}}

\subjclass[2010]{Primary:~53C44; Secondary:~53C21, 53C23}
\keywords{super Ricci flow, weak Ricci flow, metric measure space, neckpinch singularity, optimal transportation}


\maketitle

\begin{center}
	\textbf{\textit{ \footnotesize In honor of Mikhail Gromov on the occasion of his 75th birthday}}
\end{center}

\vspace{5mm}

\begin{center}
\begin{minipage}[t]{0.4\textwidth}
	\centering
\bf	{ \small Sajjad Lakzian}\\
\textit{\tiny IPM \& Isfahan University of Technology}\\
\textit{ \tiny Isfahan, Iran}\\
	
\end{minipage}
\begin{minipage}[t]{0.4\textwidth}
	\centering
	\bf{\small Michael Munn}\\
	\textit{\tiny Google}\\
	\textit{\tiny New York, USA}\\
\end{minipage}
\end{center}
\vspace{5mm}

\begin{abstract}
In this article, we study the Ricci flow neckpinch in the context of metric measure spaces. We introduce the notion of a Ricci flow metric measure spacetime and of a weak (refined) super Ricci flow associated to convex cost functions (cost functions which are increasing convex functions of the distance function). Our definition of a weak super Ricci flow is based on the coupled contraction property for suitably defined diffusions on maximal diffusion components. In our main theorem, we show that if a non-degenerate spherical neckpinch can be continued beyond the singular time by a smooth forward evolution then the corresponding Ricci flow metric measure spacetime through the singularity is a weak super Ricci flow for a (and therefore for all) convex cost functions if and only if the single point pinching phenomenon holds at singular times; i.e., if singularities form on a finite number of totally geodesic hypersurfaces of the form $\{x\} \times \sphere^n$. We also show the spacetime is a refined weak super Ricci flow if and only if the flow is a smooth Ricci flow with possibly singular final time.
\end{abstract}


\date{\today}


\tableofcontents
\section{Introduction}
Given a closed Riemannian manifold $(M^n,g)$, a smooth family of Riemannian metrics $g(t)$ on $M^n$ is said to evolve under the Ricci flow~\cite{Hamilton1982} provided
\begin{equation}
\label{EQN:RF}
\begin{cases}
	\dfrac{\partial}{\partial t} g(t) = -2\Ric (g(t));\\
	g(0) = g.
\end{cases}
\end{equation}
The uniqueness and short time existence of solutions to (\ref{EQN:RF}) was shown in~\cite{Hamilton1982} (see also~\cite{DeTurck}). When a finite time singularity occurs at some $T<\infty$, one gets 
\[
\lim_{t \nearrow T} \max_{x\in M^n}|\Rm(x,t)| = \infty.
\]
By the maximum principle, it follows that a singularity will develop in finite time once the scalar curvature becomes everywhere positive. A simple example of this can be seen for the canonical round unit sphere $\mathbb{S}^{n+1}$ which collapses to a point along the flow at $T=\frac{1}{2n}$.
\par While the shrinking sphere describes a global singularity, the neckpinch examples we are concerned with in this paper are local singularities; i.e., they occur on a compact subset of the manifold while keeping the volume positive. Intuitively, a manifold shaped like a barbell develops a finite-time local singularity as the neck part of the barbell contracts. The first rigorous examples of such a singularity were constructed in~\cite{AK1} where a class of rotationally symmetric initial metrics on $\mathbb{S}^{n+1}$, for $n\geq 2$ was found, which develop local Type-I neckpinch singularities through the Ricci flow (examples for non-compact manifolds did already exist~\cite{Simon-Pinch,Feldman-Ilmanen-Knopf}).
\subsection*{Motivation}
\par The general Sturmian theory about the behaviour of zeros of solutions to elliptic differential equations dates back to the 17th century; see Sturm~\cite{Sturmian}. In the context of rotationally symmetric Ricci flow, In~\cite{AK1}, authors applied a parabolic Sturm type result which was previously proven in~\cite{Angenent-88} to the evolution equation of $\psi_s$ where $\psi$ is the radius of the cross section spheres and $s$ the arclength parameter in radial direction. Their theorem results in finiteness and the decreasing behavior of the number of bumps and necks along smooth spherical Ricci flow and how and when these necks and bumps merge. With the additional assumptions that the metrics are reflection-invariant, the diameter remains bounded throughout the flow and the initial metric have a neck at $x=0$ and two bumps, it was shown that the neckpinch singularity occurs precisely at the equator, on the totally geodesic hypersurface $\{0\} \times \sphere^n$~\cite{AK2}. Differently put, approaching the singular time the number of necks decrease however at the singular time it might be the case that a neck gets flattened to an interval.~\cite{AK1} rules this out under the said extra conditions. Metaphorically speaking, the ``single point pinching phenomenon'' in the rotationally symmetric Ricci flow says ``a neck needs infinite room in order to spread onto an interval at the singular time''. This is very intuitive however not proven in general. We wish to study this question using methods other than the classical Ricci flow theory.
\subsection*{Goal}
\par The purpose of this paper is to put the single-point pinching question in the context of optimal transportation and provide a proof of the single-point pinching phenomenon in the setting where
\begin{itemize}
	\item Flow developes neckpinch singularity at finite time;
	\item There exists a continuation of the flow beyond the singular time by a smooth forward evolution of Ricci flow; see Definition~\ref{DEF:SFE};
	\item The resulting spacetime of product form has bounded diameter and the flow is a weak super Ricci flow corresponding to a time-dependent cost function $c_\tau(x,y) = h(d_\tau(x,y))$ for increasing convex $h$; see Definitions~\ref{DEF:WeakRicciFlow} and~\ref{DEF:RFSP-NP}.
\end{itemize}
\par Here, we differentiate between equatorial pinching and single point pinching phenomenon. For us, single point pinching refers to finite number of pinched totally geodesic hypersurfaces of the form $\{x\} \times \sphere^n$. 
\subsection*{Relevance of our set up}
The first hypothesis says our result is concerned only with finite time singularities which generally speaking are the more interesting ones. The second hypothesis is a now reasonable one to assume due to the work of~\cite{ACK} and in general, since by~\cite{Kleiner-Lott}, one expects singular Ricci flow to admit a smooth forward evolution of Ricci flow past the singular time and is expected to be unique in 3D. The third hypothesis which will be rigorously defined using optimal transport follows and is inspired by characterization of smooth super Ricci flows; see~\cite{McCann-Topping}) and~\cite{ACT} and is based on a theory of diffusions for time-dependent metric measure spaces using time dependent Cheeger-Dirichlet forms developed in~\cite{Kopfer-Sturm}. We will not assume any particular asymptotics or reflection symmetry which are necessary in existing proof of equatorial pinching~\cite{AK2}. 
\par The notion of Ricci flow metric measure spacetimes through pinch singularities defined here is different from Ricci flow spacetime in~\cite{Kleiner-Lott} in that we keep the singular set with its singular metric along the flow as part of the spacetime. In our setting, the metrics once become singular can only evolve in non-singular directions afterwards. This is also a intuitionally reasonable behavior to assume which is not covered in classical Ricci flow theory where the flow stops once a singularity is hit. 
\par Our definition of (refined) weak super Ricci flow with respect to a convex cost function $c$, which we will recall by $c-\mathcal{WSRF}$ for short, is based on a well-developed theory of diffusions for time-dependent metric measure spaces that has appeared in~\cite{Kopfer-Sturm} and has its roots in earlier works on time-dependent Dirichlet form theory. We will only be concerned with the time-dependent Cheeger-Dirichlet energy functional in order to construct diffusions. We note that the definition of weak super Ricci flow appearing here is different form the notion of weak super Ricci flow presented in~\cite{Sturm-SRF} in that we do not work with dynamic convexity of the Boltzman entropy as the definition of weak super Ricci flow and instead base our definition on the coupled contraction property for diffusions on suitable diffusion components which allows us to consider more general cost functions and more general (pinched) spaces.
\par Our approach can be applied to much more general cases than those considered in~\cite{AK1, AK2, ACK} and does not rely on the precise asymptotics obtained  and the mere existence of a smooth forward evolution suffices. This approach can be applied to other Ricci flow singularities which fit into a warped product regime. However, we will eventually only consider spherical rotationally symmetric flows that develop a non-degenerate neckpinch singularity at finite time and that can be extended beyond the singular time by a smooth forward evolution of Ricci flow; these metrics form our admissible initial data.
\par In relation to singular Ricci flows~\cite{Kleiner-Lott}, we note here that we think there is potential for applying these weak methods to singular Ricci flows in combination with $\mathcal{L}$-optimal transport characterization of super Ricci flows~\cite{Topping-L-OT} which is based on Perelman's reduced $\mathcal{L}$-distance. However, we will not explore this aspect in the present notes and only make some speculative remarks in~\textsection\ref{SEC:FUTURE}.
\subsection*{Quick tour}
\par The statement of our main theorem uses the notion of a ``rotationally symmetric spherical Ricci flow metric measure spacetime'' $\mathcal{X}$. The precise definition is given in~\textsection\ref{SEC:setup}; see Definitions~\ref{DEF:MMST} - \ref{DEF:RFMMST}. Roughly speaking, it refers to the spacetime constructed by considering a non-degenrate neckpinch on $\mathbb{S}^{n+1}$ and continued by smooth forward evolution on surviving (non-pinched) smooth pieces while keeping the singular points with their possibly evolving singular metrics. We will assume once the metric become singular at a point it can only evolve in its non-singular directions afterwards which means points in the interior of the singular set can not recover from singularities along the flow; see~\textsection\ref{SEC:setup} for precise definitions. 
\begin{definition}[admissible initial data]
	A rotationally symmetric Riemannian metric $g_0$ on $\sphere^{n+1}$ is said to be an admissible initial data, if the flow starting from $g_0$ develops a finite time non-degenerate neckpinch singularity at time $T$ which can be continued by a smooth forward evolution of Ricci flow (of finite diameter) beyond the singular time. We denote by $\mathcal{ADM}$, the set of all admissible initial data. 
\end{definition}

\begin{customthm}{1}\label{THM:MAIN}
Suppose $g_0 \in \mathcal{ADM}$ and let for some $\epsilon > 0$, $\left(  \sphere^{n+1} \times [0,T + \epsilon),  \mathbf{t}, \partial_t, g \right)$ be a resulting rotationally symmetric spherical Ricci flow pseudo-metric measure spacetime through singularity as in Definition~\ref{DEF:RFSP-NP}. Then, the single-point pinching holds if and only if the spacetime is a {\bf weak super Ricci flow} with respect to a/ any time dependent cost function $c_\tau$ of the form $c_\tau(x,y) = h\left( d_\tau(x,y) \right)$ where $h$ is strictly convex or the identity; see ~Definition~\ref{DEF:WeakRicciFlow}. Furthermore, $\left(  \sphere^{n+1} \times [0,\bar{T}],  \mathbf{t}, \partial_t, g \right)$ is a {\bf refined weak super Ricci flow} associated to cost $c_\tau$ if and only if the flow is smooth except possibly at time $\bar{T}$.
\end{customthm}
As was previously mentioned, the equatorial pinching result assumes reflection symmetry of the metric in addition to a diameter bound; see~\cite[\textsection10]{AK1}. In~\cite{AK2}, it was shown that reflection symmetry in fact implies the diameter bound the proof of which relies on careful analysis and detailed computations arising from the imposed evolution equations on the profile warping function $\psi(r,t)$. In~\textsection\ref{SEC:PROOF} we provide the proof of our main Theorem~\ref{THM:MAIN} which puts the ``one-point pinching'' result in a rather general context. Our method of proof involves techniques of optimal transportation and, as such, allows us to avoid the requirement of reflection symmetry and needing precise asymptotics, though we still require the condition on the diameter bound. 
\subsection*{Ideas of the proof}
 In the setting that we consider in this article, the proof of single point pinching relies on two important and yet intuitive observation. One is the interior of the singular set can not conduct diffusions and so diffusions starting in smooth parts stay in smooth parts. Second is single point pinches resulting in double points after the singular time will force the dynamic heat kernel to fail to be Holder continuous. 
 \par We then define diffusion components to be those which support a diffusion theory for time-dependent Cheeger-Dirichlet energy in which the dynamic heat kernel is Holder continuous in the support of the measures. These spaces can not contain double points. We will also consider rough diffusion components to be those which support a diffusion theory for Cheeger-Dirichlet spaces with a dynamic heat kernel which is allowed to be only measurable. These components can have double points in their interior. 
 \par A weak super Ricci flow (resp. refined weak super Ricci flow) is then a spacetime which can in almost everywhere sense be covered by maximal diffusion components (resp. maximal refined diffusion components) such that on any of the maximal diffusion components the coupled contraction property associated to cost $c$ holds. Of course with this definition, the weaker diffusions we allow, the more regular flows we get. So rough diffusions will lead to refined weak super Ricci flows.
\par Under the assumption that the spacetime is a weak super Ricci flow, a maximal diffusion component can only a priori contain only a neckpinch on an interval of positive length. One can make the rate of change of the total optimal cost of transportation between diffusions arbitrarily large by exploiting the inifinite propagation speed of diffusions under Ricci flow. This in turn tells us that the rate of change of the length of singular neck is unbounded to accommodate the requirement that the flow is a weak super Ricci flow. The rotational symmetry allows us to convert the computation of total cost to a 1D problem for which we can use precise formulas that exist for convex costs.
\par Under the assumption that the spacetime is a refined weak super Ricci flow, maximal refined diffusion components {\bf can} contain both interval and single point pinching and using similar arguments as discussed, one can show the existence of either types of singularities (except at the final time) will defy coupled contraction property. 
\subsection*{Organization of materials}
 In~\textsection\ref{SEC:PRELIMS} we will quickly touch upon relevant background material. In particular, we begin in~\textsection\ref{Neckpinch prelim} by discussing the formation of neckpinch singularities through the Ricci flow. We in~\textsection~\ref{SEC:SFERF} and \textsection\ref{SEC:KLSRF} briefly touch upon smooth forward evolution of Rici flow and of singular Ricci flows. In~\textsection\ref{SEC:OTPrelim} we recall the basic ideas of optimal transportation and especially 1D formulas. Of particular interest is~\textsection\ref{Sec:Existing} in that it contains information about the diffusion theory we will employ. ~\textsection\ref{SEC:OTRot} is devoted to reducing the computation of optimal cost of transport for spatially uniform probability measures to a computation on the base space when the base space is $c$-fine. In~\textsection\ref{SEC:setup}, we present the definitions and constructions of Ricci flow spacetimes and that of weak diffusions and weak super Ricci flows. In particular, in~\textsection\ref{Sec:diffusions} we provide a slight variation of diffusion theory~\cite{Kopfer-Sturm}, based of which we will define two types of weak super Ricci flows in~\textsection\ref{Sec:WSRF}. Finally, in~\textsection\ref{SEC:PROOFofTHM1.1}, we will provide the proof of our main result and will briefly mention how these ideas can be generalized to address more general neckpinch singularities. 
\begin{ack} 
\small	The authors would like to thank C. Sormani who originally encouraged the study of Ricci flow neckpinch using metric geometry. The authors would also like to thank the Hausdorff Research Institute where some of this work was completed during a Trimester program in Optimal Transport.
\par SL is grateful to D. Knopf for many insightful conversations and his hospitality and to K. T. Sturm for valuable mentoring during SL's Hausdorff postdoctoral fellowship. This work initiated when SL was supported by the NSF under DMS-0932078 000 while in residence at the Mathematical Science Research Institute during the Fall of 2013, continued when SL was supported by the Hausdorff Center for Mathematics in Bonn. At the time of completion of the project, SL is supported by Isfahan University of Technology and the School of Mathematics, Institute for Research in Fundamental Sciences (IPM) in Tehran (and Isfahan), Iran.
\normalsize
\end{ack}
\addtocontents{toc}{\setcounter{tocdepth}{2}}
\section{Background and preliminaries}
\label{SEC:PRELIMS}
\subsection{Neckpinch singularities of the Ricci flow} 
\label{Neckpinch prelim}
Singularities of the Ricci flow can be classified according to how fast they are formed. A solution $(M^n, g(t))$ to (\ref{EQN:RF}) develops a Type I, or rapidly forming, singularity at $T < \infty$, if 
\[
\sup_{M\times [0, T)} (T-t)\left|\Rm (\,\cdot\,, t)\right| < +\infty.
\]
Such singularities arise for compact 3-manifolds with positive Ricci curvature~\cite{Hamilton1982}. Indeed, any compact manifold in any dimension with positive curvature operator must develop a Type I singularity in finite time~\cite{BW2008}.
\par A solution $(M^{n+1}, g(t))$ of the Ricci flow is said to develop a \textit{neckpinch singularity} at some time $T < \infty$ through the flow by pinching an almost round cylindrical neck. More precisely, there exists a time-dependent family of proper open subsets $N(t) \subset M^{n+1}$ and diffeomorphisms $\phi_t: \R \times \sphere^n \to N(t)$ such that $g(t)$ remains regular on $M^{n+1} \setminus N(t)$ and the pullback $\phi_t^*\bigl(\left.g(t)\right|_{N(t)}\big)$ on $\R \times \sphere^n$ approaches the ``shrinking cylinder'' soliton metric
\[
ds^2 + 2(n-1)(T-t) g_{\text{can}}
\]
in $\mathcal{C}^{\infty}_{loc}$ as $t \nearrow T$ where $g_{\text{can}}$ denotes the canonical round metric on the unit sphere $\sphere^n(1)$. The neckpinch is called a non-degenerate neckpinch if the shrinking cylinder soliton is the only possible blow up limit as opposed to the degenerate examples which could lead to other solitons (such as a Bryant soliton) as a blow up limit~\cite{AIK-degenrate}. 
\par Following~\cite{AK1}, consider $\sphere^{n+1}$ and remove the poles $P_{\pm}$ to identify $\sphere^{n+1} \setminus{P_{\pm}}$ with $(-1,1) \times \sphere^n$. An SO($n+1$)-invariant metric on $\mathbb{S}^{n+1}$ can be written as 
\begin{equation}
\label{EQN:InitialRotSymg} 
g = \phi(x)^2 (dx)^2 + \psi(x)^2 g_{\text{can}}
\end{equation}
where $x \in (-1,1)$. Letting $r$ denote the distance to the equator given by 
\[ 
\label{EQN:rDefn} 
r(x) = \int_0^x \phi(y) dy;
\]
one can rewrite (\ref{EQN:InitialRotSymg}) more geometrically as a warped product 
\[
g = (dr)^2 + \psi(r)^2 g_{\text{can}}.
\]
It is customary to switch between variables $x$ and $r$ when necessary in computations and the derivatives with respect to these variables are related by 
$
\frac{\partial}{\partial r} = \frac{1}{\phi(x)} \frac{\partial }{\partial x}.
$
To ensure smoothness of the metric at the poles, one requires $\lim_{x\to \pm 1} \psi_r = \mp 1$ and $\phi/(r_{\pm}-r)$ is a smooth even function of $r_{\pm}-r$ where $r_{\pm} :=r(\pm1)$. This assumption of rotational symmetry on the metric allows for a simplification of the full Ricci flow system from a non-linear PDE into a system of scalar parabolic PDE in one space dimension.
\begin{equation}
\label{phipsi evolve}
\begin{cases}
(\psi_1)_t = (\psi_1)_{rr} - (n-1)\dfrac{1-(\psi_1)_r^2}{\psi_1}; \\
(\phi_1)_t = \dfrac{n(\psi_1)_{rr}}{\psi_1}\phi_1;
\end{cases}
\end{equation}
furthermore, the Ricci curvature is given by
\begin{equation}
\label{Ricci}
\Ric = -n\frac{(\psi)_{rr}}{\psi} dr^2 + \left(-\psi_1(\psi)_{rr} - (n-1(\psi)_r^2 + n-1 \right) g_{\text{can}};
\end{equation} 
e.g.~\cite{AK1, AK2}.
\par In~\cite{AK1}, the authors establish existence of Type I neckpinch singularities for an open set of initial $\SO(n+1)$-invariant metrics on $\sphere^{n+1}$ satisfying the properties:
\begin{itemize}
	\item positive scalar curvature on all of $\sphere^{n+1}$ and positive Ricci curvature on the ``polar caps'', i.e., the part from the pole to the nearest ``bump'';
	\item positive sectional curvature on the planes tangential to $\{x\} \times \sphere^n$;
	\item ``sufficiently pinched'' necks; i.e., the minimum radius should be sufficiently small relative to the maximum radius; c.f.  \textsection8 of~\cite{AK1}.
\end{itemize}
Throughout this paper, let $\AK$ denote this open subset of initial metrics on $\sphere^{n+1}$ defined above and let $\AK_0 \subset \AK$ denote those metrics in $\AK$ which are also reflection symmetric, i.e., $\psi(r, 0) = \psi(-r,0)$ and which furthermore have a ``single'' symmetric neck at $x=0$ and two bumps. The set $\tilde{\AK}$ is those initial conditions that satisfy the asymptotic conditions of~\cite[Table 1]{ACK} at the singular time. So clearly the inclusion relations
\[
       \AK_0 \subset \AK \subseteq \tilde{\AK} \subseteq \mathcal{ADM}
\]
hold.
\par Given an initial metric $g_0 \in \AK$, the solution $(\sphere^{n+1}, g(t))$ of the Ricci flow becomes singular, developing a neckpinch singularity, at some $T < \infty$. Furthermore, provided $g_0 \in \AK_0$, its diameter remains bounded for all $t \in [0, T)$ and the singularity occurs only on the totally geodesic hypersurface $\{0\} \times \sphere^n$; see~\cite[\textsection10]{AK1} and~\cite[Lemma 2]{AK2}.
\par In~\cite{AK}, equatorial pinching is proven assuming restrictive conditions that the metric $g(t)$ has at least two bumps, i.e., local maxima of $x \mapsto \psi(x,t)$, and denote the locations of those bumps by $x_-(t)$ and $x_+(t)$ for the left and right bump (resp.) and a neck at $x=0$. So it is not really clear (even though highly expected) how the single-pint pinching can be proven for general neckpinches. To show that the singularity occurs only on $\{0\} \times \sphere^{n}$, they show that $\psi(r, T) >0$ for $r>0$. Their proof of this one-point pinching requires delicate analysis and construction of a family of subsolutions for $\psi_r$ along the flow.
\par An important consequence of the Sturmian result obtained in~\cite[Lemma 5.5]{AK1} is that even-though there is the possibility that a neck could spread over an interval at the singular time, the number of connected components of the singular set at the first singular time has to be finite. So the (closed) singular set is apriori a finite disjoint union of closed intervals and points.
\par It is noteworthy that in~\cite{AK2}, precise asymptotics ar eobtained for such flows going into the non-degenrate singularity. These asymtotics are important in that, in conjunction with~\cite{ACK}, they provide sufficient conditions for a smooth forward evolution of Ricci flow to exist. 
\subsection{Smooth forward evolution}
\label{SEC:SFERF}
In~\cite{ACK}, the authors extend the spherical neckpinch past the singular time by constructing smooth forward evolutions of the Ricci flow starting from initial singular metrics which arise from these rotationally symmetric neck pinches on $\sphere^{n+1}$ described above. To do so requires a careful limiting argument and precise control on the asymptotic profile of the singularity as it emerges from the neckpinch singularity (solution is constructed in pieces by the method of formal matched asymptotics). By passing to the limit of a sequence of these Ricci flows with surgery, effectively the surgery is performed at scale zero. Following~\cite{ACK}, define
\begin{definition}[smooth forward evolution]\label{DEF:SFE}
	A smooth complete solution $(M^n, g(t))$ on some interval $t \in (T, T')$ of the Ricci flow is called a {\bf forward evolution} of a singular Riemannian metric $g(T)$ on $M^n$ if as $t \searrow T$, $g(t)$ converges to $g(T)$ in $C^\infty_{loc}(\mathcal{R})$, where $\mathcal{R}$ is the (open) regular set of $g(T)$.
\end{definition}
\par Let $g_0$ denote a singular Riemannian metric on $\sphere^{n+1}$, for $n\geq 2$, arising from the limit as $t \nearrow T$ of a rotationally symmetric neckpinch forming at time $T <\infty$. Given that this singular metric satisfies certain asymptotics (see~\cite[Page 6]{ACK}) in particular if the pinching is already at the equator, there exists a complete smooth forward evolution $(\sphere^{n+1}, g(t))$ for $T < t < T'$ of $g_0$ by the Ricci flow. Any such smooth forward evolution is compact and satisfies a unique asymptotic profile as it emerges from the singularity and is expected to be unique; see~\cite[Theorem 1]{ACK}.
\par Viewed together,~\cite{AK1, ACK,AK2} provide a framework for developing the notion of a ``canonically defined Ricci flow through singularities'' as conjectured by Perelman in~\cite{Perelman-RFWS}, albeit in this rather restricted context of rotationally symmetric non-degenerate neckpinch singularities which arise for these particular initial metrics on $\mathbb{S}^{n+1}$, i.e., for $g_0 \in \AK_0$. Such a construction for more general singularities remains a very difficult issue. Even construction of precise examples are hard; see~\cite{IKS} for first precise examples. Prior to~\cite{ACK}, continuing a solution of the Ricci flow past a singular time $T < \infty$ required surgery (and throwing away some parts of the flow containing singular points) and a series of carefully made choices so that certain crucial estimates remain bounded through the flow. 
\par Ideally, a complete canonical Ricci flow through singularities would avoid these arbitrary choices and would be broad enough to address all types of singularities that arise in the Ricci flow. We refer the reader to~\cite{Hamilton-Four-Manifolds, Perelman-RFWS, Perelman-entropy, Perelman-RFsolns}. Recent work~\cite{Kleiner-Lott} on singular Ricci flows answers Perelman's question in three-dimensions in rather full generality and due to its importance and relevance, it will be briefly touched upon in the next section.
\par Existence of smooth forward evolution of Ricci flow for singular metrics not satisfying the said asymptotics does not follow from~\cite{ACK} however, it is implied by~\cite{Kleiner-Lott}. In our study we will consider singular metrics that admit a smooth forward evolution based on Definition~\ref{DEF:SFE}. This will be elaborated on in~\textsection\ref{SEC:setup}. 

\subsection{The singular Ricci flow}
\label{SEC:KLSRF}
The Perelman's question ``Is there a canonical Ricci flow through Singularities?'' has motivated some important work in the field and as such is the theory of singular Ricci flows introduced in~\cite{Kleiner-Lott}. One way singular Ricci flow is important to us is that heuristically speaking, the existence of singular Ricci flow implies the existence of a smooth forward evolution in three dimensions although we are not proving this here nor use it and instead will restrict ourselves to initial metrics for which a smooth forward evolution beyond the singularity exists. Exploring the optimal transport in this spacetime would potentially lead to some interesting results. We will make further remarks in~\textsection\ref{SEC:FUTURE} as to how our ideas could be adapted to singular Ricci flows. 
\par A Ricci flow spacetime in the sense of~\cite{Kleiner-Lott} is a quadruple $\left( \mathcal{M}, \mathbf{t}, \partial_t, g \right)$ consisting of a possibly incomplete four dimensional Riemannian manifold $\mathcal{M}$, a submersive time function  $\mathbf{t}: \mathcal{M} \to  I \subset \R$, a time vector field $\partial_t$ which is the gradient of time function and Riemannian metric $g$. The Ricci flow equation is captured by requiring $\mathcal{L}_{\partial_t} g = - 2 \Ric(g)$. A singular Ricci flow is then defined to be an eternal four dimensional Ricci flow spacetime where the $0$-th time slice is a normalized three dimensional Riemannian manifold, satisfies Hamilton-Ivey curvature pinching condition and is $\varkappa$-non-collapsed below the scale $\epsilon$ and satisfies $\rho$- canonical neighborhood assumption for a fixed $\epsilon$ and decreasing functions $\varkappa, \rho: \R_{\ge0} \to \R_{\ge0}$~\cite{Kleiner-Lott}.
\par Any compact normalized three dimensional Riemannian manifold is the $0$-th time slice of a singular Ricci flow; see~\cite[Theorem 1.3]{Kleiner-Lott}. Another striking result is the uniqueness of such singular Ricci flows obtained as limits of Ricci flows with surgery which is proven in~\cite{Bamler-Kleiner}. These two theorems together affirmatively answers Perelman's question in the sense that it shows there is a way to uniquely flow a three dimensional manifold through singularities. 
\par This existence and uniqueness theory for singular Ricci flows in three dimensions substantiate our hypothesis of existence of smooth forward evolution at least for three dimensional Ricci flow neckpinch.
\subsection{Various weak notions of Ricci flow}
By now there are various ways to make sense of Ricci flow or supersolutions thereof (super Ricci flows) starting from initial data which are less regular than $C^2$ or are even non-Riemannian (in a broad sense when the Sobolev space $W^{1,2}$ is not Hilbert). We have already touched upon smooth forward evolution and  singular Ricci flows. In later sections we will also briefly review characterization of smooth super Ricci flows using optimal transportation \cite{McCann-Topping, ACT, Sturm-SRF, Kopfer-Sturm} which has motivated us for definition of weak super Ricci flows associated to convex cost functions. We deemed fit to include a brief and non-exhaustive list of some other progresses in this direction. 
\par In the setting of Finsler structures, Finsler-Ricci flow has been introduced in~\cite{Bao} and differential Harnack estimates for positive heat solutions under Finsler-Ricci flow can be found in~\cite{Lakzian-DHE-FRF} for vanishing $S$-curvature along the flow. 
\par Concerning K\"{a}hler-Ricci flow, weak K\"{a}hler-Ricci flow out of initial K\"{a}hler currents with $C^{1,1}$ potential is introduced in~\cite{CTZ}. K\"{a}hler-Ricci flow thorough singularities using divisorial contractions and flips was considered in~\cite{Song-Tian}. See~\cite{Zhou,GZ, EGZ} for related work in this direction.
\par In the setting of metric measure spaces, it is noteworthy that in~\cite{Haslhofer-Naber}, smooth Ricci flow has been characterized in terms of gradient estimates for heat flow on the path space using methods of stochastic analysis that can potentially be extended to RCD spaces; see also~\cite{ACT} for related work on path space. A rather full theory of weak super Ricic flows has been developed in~\cite{Sturm-SRF} which is based on dynamic convexity of the Boltzmann entropy. These weak flows can be characterized in terms of couple contraction property for diffusions when there is no pinching in the metric (by which, we mean the time slices are RCD spaces and the time dependent metric and measure satisfy some regularity assumptions); see~\cite{Kopfer-Sturm, Kopfer}. In particular, the coupled contractio property proven in~\cite{Kopfer-Sturm} is not directly applicable to the Ricci flow through a neckpinch singularity. Our notion of weak super Ricci flow associated to a convex cost $c$ is based on weak diffusions for a time-dependent metric measure space constructed in~\cite{Kopfer-Sturm} on so called maximal diffusion components.
\par Regarding similar intrinsic flows on metric measure spaces, an intrinsic flow tangent to Ricic flow has been introduced in~\cite{Gigli-Mantegazza} via heat kernel methods. The regularization effect of this flow has been studied in~\cite{BLM, Erbar-Juillet}.

\subsection{Optimal transportation}
\label{SEC:OTPrelim}
Let $(X,d)$ be a compact metric space and consider the space of Borel probability measures on $X$ denoted $\mathscr{P}(X)$. Given two probability measures $\mu_1, \mu_2 \in \mathscr{P}(X)$ the optimal total cost of transporting $\mu_1$ to $\mu_2$ with cost function $c(x,y)$ is given by
\begin{equation}
	\label{DEF:totalcost}
	\mathcal{T}_c(\mu_1, \mu_2) = \left[\inf_{\pi \in \Gamma(\mu_1,\mu_2)} \int_{X^2} c(x,y) d\pi(x,y)\right],
\end{equation}
where the infimum is taken over the space $\Gamma(\mu_1, \mu_2)$ of all  joint probability measures $\pi$ on $X^2$ which have marginals $\mu_1$ and $\mu_2$; i.e., for projections $\pr_1, \pr_2: X^2 \to X$ onto the first and second factors (resp.), one has
\[
({\pr_1})_\# \pi = \mu_1 \quad \text{ and } \quad  ({\pr_2})_\# \pi = \mu_2.
\]
Any such probability measure $\pi\in \Gamma(\mu_1, \mu_2)$ is called a {\it transference plan} between $\mu_1$ and $\mu_2$ and when $\pi$ realizes the infimum in (\ref{DEF:totalcost}) we say $\pi$ is an {\it optimal transference plan}. Monge-Kantrovitch's problem is solving for an optimal $\pi$ and the classical Monge's problem is solving for an optimal plan induced by a transport map $T: X \to X$ i.e. when
\[
\pi = \left( \id \times T  \right)_\sharp \mu_1. 
\] 
\par A particular case is to use the $L^p$ cost function $c(x,y) = d^p(x,y)$. The $L^p$-Wasserstein distance between $\mu_1$ and $\mu_2$  is given by 
\begin{equation}
W_p(\mu_1, \mu_2) = \left[\inf_{\pi \in \Gamma(\mu_1,\mu_2)} \int_{X^2} d(x,y)^p d\pi(x,y)\right]^{1/p}, \quad \text{ for } p \geq 1. \notag
\end{equation}
\subsubsection{Optimal transport on $\mathbb{R}$} In the one dimensional case, the total cost of transport between two absolutely continuous measures with cost functions of the form $c(x,y) = h\left( \left|  x - y  \right| \right)$ with $f$ convex, can be computed directly from the cumulative distribution functions. The 1D case is important for us since as we will show in~\textsection\ref{SEC:OTRot}, computing the optimal total transport cost between two spatially uniform measures in the rotationally symmetric case reduces to the 1D problem of computing optimal total transport cost between the projection of the said measures to the real line. The material in this section are by now standard; e.g. ~\cite{RR, Villani-TOT, Santambrogio}.
\par Any probability measure $\mu$ on $\mathbb{R}$ can be represented by its cumulative distribution function
\[
F(x) = \int_{-\infty}^x d\mu = \mu\left[ (-\infty, x)\right].
\]
From basic probability theory, it follows that $F$ is right-continuous, non-decreasing, $F(-\infty)=0$, and $F(+\infty)=1$. The generalized inverse of $F$ on $[0,1]$ is defined by
\[F^{-1}(t) = \inf\{x \in \mathbb{R} ~:~ F(x) >t\}.\]
\par Let the cost function be a convex function of distance. For two probability measures $\mu_1$ and $\mu_2$ on $\R$ with respective cumulative distributions $F$ and $G$, the total cost of transport between $\mu_1$ and $\mu_2$ is given in terms of the cumulative distributions by the formula
\begin{equation}
\label{1D-totalcost} 
\mathcal{T}_c(\mu_1, \mu_2) = \int_0^1 h\left(  \left| F^{-1}(t) - G ^{-1}(t) \right| \right) dt;
\end{equation}
furthermore, for the linear cost, Fubini's theorem implies
\begin{equation}
\label{W1 vs L1 dist} 
W_1(\mu_1,\mu_2) = \int_0^1 |F^{-1}(t) - G ^{-1}(t)| dt=  \int_{\R} |F(x) - G (x)| dx.
\end{equation}
See~\cite[Theorem 3.7.1]{RR} and~\cite[Theorem 2.18]{Villani-TOT}; c.f. Santambrogio~\cite[Proposition 2.17]{Santambrogio}; also see Figure~\ref{fig:distributions1}. 
\subsection{Ricci flow and optimal transport} 
\label{SEC:RFandOT}
\subsubsection{Characterization of smooth super Ricci flows via coupled contraction for diffusions}
The importance of considering super Ricci flows was first highlighted~\cite{Topping-McCann} and was tied to the theory of optimal transportation. Also see the other related work~\cite{Lott-OT, Topping-L-OT, Topping-FoundationsOT, ACT} and the more recent works~\cite{Sturm-SRF, Kopfer-Sturm}.
\par Given a solution to (\ref{EQN:RF}) on some time interval $t \in [0, T]$, let $\tau := T-t$ denote the backwards time  parameter. Note that  $\del/ \del \tau = - \del / \del t$. A Ricci flow parameterized in backward time (or backward Ricci flow) is $g(\tau)$ where
\begin{equation}
\label{EQN:bRF}
\frac{\partial g}{\partial \tau} = 2\Ric(g(\tau)). 
\end{equation}
Subsolutions of (\ref{EQN:bRF}) (or equivalently supersolutions of Ricci flow equation) are called super Ricci flows and satisfy
\[\frac{\partial g}{\partial \tau} \le 2 \Ric (g(\tau)).\]
\par A family of smooth measures $\mu(\tau)$ whose element is a smooth $n-$form $\omega(\tau)$, is called a diffusion if 
\[
\frac{\partial u}{ \partial \tau } \omega(\tau) = \Delta_{g(\tau)}  \omega(\tau);
\]
equivalently, the density function satisfies the conjugate heat equation
\begin{equation}
\label{EQN:ConjHeatEqn}
\frac{\partial u}{ \partial \tau } =  \Delta_{g(\tau)} u - R_{g(\tau)}u.
\end{equation}
where $R_{g(\tau)}$ denotes the scalar curvature~\cite{Topping-McCann}.
\par It is shown in~\cite{Topping-McCann} that a smooth one parameter family of metrics is a super Ricci flow if and only if the $L^2$-Wasserstein distance between two diffusions is non-increasing in $\tau \in (a,b)$; namely the so-called dynamic coupled contraction property is equivalent to being a super Ricci flow. The same also holds using the $L^1$-Wasserstein distance; c.f.~\cite{Topping-FoundationsOT}. 
\par The dynamic coupled contraction for general cost functions (cost functions which are increasing in distance) has been shown to holds under
\[
\frac{\partial g}{\partial \tau} \le 2 \Ric (g(\tau)) + \nabla_\tau^2f;
\]
so in particular, dynamic coupled contraction holds for supper Ricci flows by setting the potential $f$ equal to $0$; see~\cite[Theorem 4.1]{ACT}.
\par The proof of coupled contraction in~\cite{ACT} does not provide \emph{equivalence} even for the $L^p$ cost function with $p\neq 1,2$. So it is still unknown to us that for what cost functions does the contraction property coincide with the flow being a super Ricci flow. Our guess is this should hold true at least for costs that are strictly convex functions of distance. Nevertheless, the results mentioned thus far motivate us to define weak super Ricci flows corresponding to a convex cost function $c_\tau = h \circ d_\tau$ by the dynamic coupled contraction property. We will pursue this approach in~\textsection\ref{Sec:WSRF}. 
\par We note that the notion and exmaples of super Ricci flows on a metric space consisting of disjoint union of Riemannian manifolds has been explored in~\cite{Lakzian-Munn} as the first smooth examples yet outside the standard realm of smooth Ricci flows. 
\subsection{A closer look at relevant existing theories}
\label{Sec:Existing}
A theory of ($N$-) super Ricci flows for time-dependent metric measure spaces has been recently developed in~\cite{Sturm-SRF}. According to~\cite{Sturm-SRF}, an ($N$-) super Ricci flow is a time-dependent metric measure space (with fix underlying set) for which the Boltzmann entropy is strongly dynamical ($N$-) convex; see~\cite{Sturm-SRF} for further details. 
\par The interplay between dynamical convexity of the Boltzmann entropy and of coupled contraction property of the $L^2$-Wasserstein distances for diffusions has been explored in~\cite{Kopfer-Sturm} much in the spirit of~\cite{McCann-Topping} in the Riemannian setting yet with much more added difficulty of having to establish a theory of gradient flow of time-dependent Dirichlet energy and of Entropy. A unified approach to gradient flow theory in the static setting had been explored for rather general spaces in~\cite{AGS-GF} though, time-dependent spaces require more delicate analysis as carried out in~\cite{Kopfer-Sturm}. 
\par In the static case, a regular symmetric strongly local closed Dirichlet form gives rise to a diffusion semigroup given by a kernel; see the classic reference~\cite{FOT}. In relation to the geometry of a metric measure spaces, this approach was taken on, in e.g.~\cite{Sturm-HK}. In the dynamic setting, a time-dependent Dirichlet form can still be used to construct diffusions if we assume ellipticity with respect to a fixed Dirichlet form and if we assume square field operators satisfy the chain rule; see~\cite{Sturm-DirII, Oshima2004, Lierl20}. 
\par For a time-dependent metric measure space,~\cite{Kopfer-Sturm} has brought all these various aspects together and presented a fine theory of diffuions linked to ($N$-) super Ricci flows introduced in~\cite{Sturm-SRF}.~\cite{Kopfer-Sturm} provides a theory of what we will refer to as $L^2$-weak super Ricci flow. Dynamic coupled contraction for ($N$-) super Ricci flows has been verified under strong regularity assumptions on the time slices such as curvature bounds. These assumptions fail to hold for flows going through non-degenerate neckpinches. 
\par Following~\cite{Kopfer-Sturm}, let $\left( X, d_t, m_t  \right)$ be a family of Polish compact metric measure spaces where the Borel measures $m_\tau$ at different times are mutually absolutely continuous with logarithmic densities (with respect to a fixed measure) that are Lipschitz in time and where the distances satisfy the non-collapsing property
\[
\left|  \ln \frac{d_t(x,y)}{d_s(x,y)}  \right| \le L |t-s|.
\]
which they refer to as the log-Lipschitz condition. Under these conditions and given a time dependent family of Dirichlet forms whose square field operators satisfy diffusion property and uniform ellipticity with respect to a given background strongly local regular Dirichlet form, and if the square field operators applied to logarithmic densities are uniformly bounded, solutions to dynamic heat and the dynamic conjugate heat equations (defined weakly in terms of the Dirichlet form) exist and are unique; see~\cite{Kopfer-Sturm} for details.
\par In our setting, the flow undergoes a neckpinch singularity which causes the loss of doubling condition and although the time slices still remain infinitesimally Hilbertian (e.g. ~\cite{IH-property}), they will not satisfy any curvature bounds. Indeed the time slices are not essentially non-branching anymore and furthermore at the singular time, there is a cusp singularity so the volume doubling fails. Therefore, the work in~\cite{Kopfer-Sturm} does not provide equivalence between dynamic coupled contraction and ($N$-) super Ricci flows. On the other hand, since the static convexity fails, we expect $N$-convexity of Boltzman entropy to fail for these spaces and yet it only makes sense for such a flow to be a super Ricci flow in some sense. All this facts prompt us to instead directly use the dynamic coupled contraction as the definition of weak super Ricci flow. This is the approach that we will take on in this article. 

\subsection{Warped product of metric spaces}
\label{Sec:Warped}
We will need, by now standard, generalization of warped products to the metric space setting along with the characterization of geodesics therein~\cite{Chen1999}. Let $(X, d_X)$ and $(Y, d_Y)$ are two metric spaces and  $f: X \to \R^+$ is a continuous function on $X$. For two points $a,b \in X \times Y$, according to~\cite{Chen1999}, one defines
\[
d(a,b) := \inf_{\gamma} \{L_f(\gamma) ~:~ \gamma \text{ is a curve from } x \text{ to } y\}
\]
where $L_f(\gamma)$ denotes the length of the curve $\gamma(s) := (\alpha(s), \beta(s)) \in M \times N$ given by 
\[
L_f(\gamma) : = \lim_{\tau} \sum^n_{i=1} \Big(  d_X^2(\alpha(t_{i-1}),\alpha(t_i)) + f^2(\alpha(t_{i-1}))d_Y^2(\beta(t_{i-1}), \beta(t_i))  \Big)^{\frac{1}{2}},
\]
and the limit is taken with respect to the refinement ordering of partitions $\tau$ of $[0,1]$ denoted by $\tau : 0 = t_0 < t_1 < \cdots < t_n = 1$.
$X\times Y$ equipped with this metric $d$ is called the {\it warped product of $X$ and $Y$ with warping function $f$} and is denoted $(X\times_f Y, d)$. $d$ is indeed a distance metric on $X \times Y$~\cite{Chen1999}.
\par With $(X, d_X), (Y,d_Y)$ and $f$ as above, if $X$ is a complete, locally compact metric space and $Y$ is a geodesic space, then for any $a,b \in (X \times_f Y, d)$ there exists a geodesic joining $a$ to $b$~\cite{Chen1999}.
\par Geodesics in $X$ lift horizontally to geodesics in $(X \times_f Y,d)$. Furthermore, if $(\alpha, \beta)$ is a geodesic in $(X \times_f Y, d)$ then $\beta$ is a geodesic in $Y$~\cite{Chen1999}.
\section{Optimal transport in warped product spaces}
\label{SEC:OTRot}
Here, we wish to examine the optimal transport problem for two spatially uniform measures $\mu_1, \mu_2$ on a given metric space which is of the warped product form. This will be used to reduce the computation of optimal total transport cost of such measures in a warped product space to computation of optimal total transport cost between the projection of the measures to the base space. 
\subsection{Spatially uniform measures}
In our study, we will need to be able to handle collapsed spaces such as time slices in spherical Ricci flow metric measure spacetimes through neckpinch singularities. So we will be concerned with warped products in which the warping function is only non-negative. Let $\left(  X, d_X, m_X  \right)$ and $\left( Y, d_Y, m_Y \right)$ be two geodesic metric measure spaces. Let $f: X \to \R^{\ge0}$ be a non-negative warping function. The construction in~\textsection\ref{Sec:Warped} can be carried out to obtain, this time, a semi metric space
\[
\tilde{Z} = X \times_f Y
\]
equipped with an intrinsic semi distance $\tilde{d}$ computed as in~\ref{Sec:Warped}. Notice that $\tilde{d}$ is a semi metric since when $f(x) = 0$, 
\[
\tilde{d} \left( (x,p), (x,q)  \right) = 0, \quad \forall p,q \in Y.
\]
Let $Z$ be the quotient space
\[
Z := \frac{\tilde{Z}}{\sim}, \quad (x,p) \sim (x,q)\quad  \mathrm{iff}\quad  f(x) =0 \quad \text{or} \quad p=q.
\]
Then, $\tilde{d}$ reduces to a distance $d$ on the quotient space $Z$ turning it into an intrinsic geodesic metric space. When it comes to picking a background measure on $Z$, one has variety of choices. Even though the optimal transport problem between two measures can be defined regardless of what the background measure is, many nice properties of the transport can be deduced from the properties of the background measure when the two transported measures are absolutely continuous with respect to the background measure. For example the background measure will obviously affect the curvature bounds for the warped product space which in turn can be used to show existence of optimal maps; see~\textsection\ref{Sec:finespaces}. Any reasonable and geometrically meaningful background measure should be absolutely continuous with respect to the product measure on parts where the warping function is positive so, for us this is the class of admissible background measures for which we will state our results. 
\begin{definition}
Let $Z$ be as in above and $P: \tilde{Z} \to Z$ be the projection map onto equivalence classes and let $\tilde{D}_+ \subset \tilde{Z}$ be given by
\[
\tilde{D}_+ := \left\{  (x,p) ~|~ f(x) > 0  \right\}.
\]
A measure $m_Z$ on $Z$ is said to be an admissible measure for the warped product whenever
\[
m_Z \ll P_\sharp \Big( \left( m_X \otimes m_Y  \right)\mathbf{1}_{\tilde{D}_+} \Big).
\]
\end{definition}
 In order to be able to reduce the transport problem between two measures to a problem on the base space, the two probability measures need to be spread out uniformly over the space. 
 \begin{definition}[spatially uniform product measures]
 	Let $Z$ be as in above. A measure $\mu$ on $Z$ is called spatially uniform whenever 
 	\[
 	\mu = P_\sharp \left( \tilde{\mu} \otimes m_Y  \right)
 	\]
 	for some measure $\tilde{\mu}$ on $X$.
 \end{definition}
\subsection{$c$-fine geodesic spaces}
\label{Sec:finespaces}
Since our proof takes advantage of the existence of optimal maps on the base space, we will need to assume some regularity assumptions on the base space $X$ strong enough to guarantee existence of optimal maps. These will be called fine base spaces. 
\begin{definition}[$c$-fine base space]
	Let  $\left(X, d_X, m_X\right)$ be a given geodesic metric measure space and $c(x,y)$ a cost function. In these notes, $X$ is said to be a $c$-fine geodesic space if the Monge problem can be solved for two absolutely continuous measures with finite transport cost between them i.e. if there always exist an optimal transport map between two absolutely continuous probability measures whose transport cost is finite. We will refer to $X$ as a fine base space if $X$ is $c$-fine for all cost functions that are convex functions of distance. 
\end{definition}
\subsubsection{A glossary of fine spaces} Suppose the cost function is of the form $c=  h\left(d(x,y)\right)$ and $h$ is strictly convex; for such cost functions, $c$-fine spaces include (with overlaps) Riemannian manifolds and Alexandrov spaces; see~\cite{McCann-polar,Bert}. Also based on~\cite{Cav-Hues}, proper non-branching metric measure spaces are $c$-fine provided that a mild non-contraction property (\cite[Assumption 1]{Cav-Hues}) holds and in particular, proper non-branching spaces which satisfy a measure contraction property (as in~\cite{Sturm-2006-II, Ohta-MCP}) are $c$-fine.
\par We know that in the special case when $h(x) = x^2$ (the squared distance cost), the class of $c$-fine spaces is richer and furthermore includes (with overlaps) $CD(K,N)$ ($1\le N \le \infty$) spaces, the Heisenberg group and essentially non-branching metric measure spaces satisfying $MCP(K,N)$ ($1\le N<\infty$); see~\cite{AR-04, AR-14, Gigli-optimalmaps, GRS, Cav-Mon}. 
\par When $h$ is not necessarily strictly convex, the solvability of Monge's problem is very delicate. For the cost given by a general norm in $\R^n$, the existence follows from the long line of works~\cite{Sudakov, Evans-Gangbo, CFM, Trudinger-Wang, AKP, Caravenna, CD-1, Cd-2}. In particular, it follows that $\R^n$ equipped with the Lebesgue measure is $c$-fine when $c= h\left( d(x,y) \right)$ is a norm. For complete Riemannian manifolds and when $c(x,y) = d(x,y)$, the existence of optimal maps for two absolutely continuous measures with compact supports follows from~\cite{Feldman-McCann}. 
\par Solvability of Monge's problem for the distance cost in non-branching geodesic metric measure spaces has been studied in~\cite{Bian-Cav} and in particular, a geodesic metric measure space which satisfies a measure contraction property $MCP(K,N)$ is $c$-fine where $c$ is the distance cost. 
\par The results of this section work for any $c$-fine base space however, we will only apply them to the case where $X$ is the interval $[-1,1]$ and $c=h\left( d(x,y)  \right)$ with $h$ convex so, $X$ is obviously $c$-fine. 
\begin{remark}
 Uniqueness of optimal maps in $c$-fine spaces is also provided in the majority of the cases mentioned above which we will not discuss here. We just point out that in a collapsed warped product space over a fine base space, optimal maps for transporting spatially uniform measures exist but they are, in general, not unique since the warping function becoming zero will cause a lot of geodesics to branch. 
 \end{remark}
\subsection{Transport of spatially uniform measures}
\begin{theorem}
Let $\left( X, d_X, m_X \right)$ be a $c$-fine base space for a cost function of the form $c(x,y) = h\left( d(x,y) \right)$ with $h$ non-negative and non-decreasing. Let $Z$ be as in above and $m_Z$ be an admissible background measure. Suppose $\mu_1$ and $\mu_2$ are two spatially uniform product measures of mass $1$ on $Z$, which are absolutely continuous with respect to $m_Z$, given by
$
\mu_1 = P_\sharp \left(  \tilde{\mu}_1 \otimes m_Y \right)
$
and 
$
\mu_2 = P_\sharp \left(  \tilde{\mu}_2 \otimes m_Y \right).
$
 Set
\[
\bar{c} \left( (x,p), (y,q)    \right) = h \left( d_X \left( x,y  \right)  \right)
\]
to be the cost function on $X$ corresponding to $c$. Then, the optimal total transportation costs satisfy
\[
\mathcal{T}_{\bar{c}} \left( \bar{\mu}_1, \bar{\mu}_2  \right) = \mathcal{T}_{c} \left( \mu_1, \mu_2  \right).
\]
where $\bar{\mu}_i$ is the pushforward probability measure, ${\mathrm{pr}_1}_\sharp \mu_i$, on $X$. Here, $\mathrm{pr}: Z \to X$ is the pushout of the projection map $\mathrm{Pr}: \tilde{Z} \to X$ by $P$ given by
\[
\mathrm{pr} \left( [x,p] \right) = x.
\]
\end{theorem}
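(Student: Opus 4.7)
The plan is to establish the two inequalities $\mathcal{T}_c(\mu_1,\mu_2) \geq \mathcal{T}_{\bar c}(\bar\mu_1,\bar\mu_2)$ and $\mathcal{T}_c(\mu_1,\mu_2) \leq \mathcal{T}_{\bar c}(\bar\mu_1,\bar\mu_2)$ separately. The backbone of both arguments is a single geometric fact about the warped metric: projection to the base is $1$-Lipschitz while \emph{horizontal} lifts are distance preserving. Concretely, for any curve $\gamma=(\alpha,\beta)$ in $\tilde Z$ the warped length formula of \textsection\ref{Sec:Warped} gives $L_f(\gamma) \geq L_{d_X}(\alpha)$, whence $d_Z([x,p],[y,q]) \geq d_X(x,y)$; and choosing $\beta \equiv p$ along a geodesic $\alpha$ from $x$ to $y$ in $X$ yields $L_f(\gamma)=L_{d_X}(\alpha)$, giving the equality $d_Z([x,p],[y,p]) = d_X(x,y)$ on the non-collapsed part.

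For the $\geq$ direction, I would take any $\pi \in \Gamma(\mu_1,\mu_2)$ and push it forward by $\mathrm{pr}\times\mathrm{pr}: Z\times Z \to X\times X$ to obtain $\bar\pi := (\mathrm{pr}\times\mathrm{pr})_\sharp \pi$. A direct marginal check, using $\mathrm{pr}_\sharp \mu_i = \bar\mu_i$, shows $\bar\pi \in \Gamma(\bar\mu_1,\bar\mu_2)$. Since $h$ is non-decreasing, the pointwise inequality $c(z_1,z_2) \geq \bar c(\mathrm{pr}(z_1),\mathrm{pr}(z_2))$ integrates to
\[
\int_{Z\times Z} c \, d\pi \;\geq\; \int_{X\times X} \bar c \, d\bar\pi \;\geq\; \mathcal{T}_{\bar c}(\bar\mu_1,\bar\mu_2),
\]
and taking the infimum over $\pi$ gives the lower bound.

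For the $\leq$ direction, I would exploit the $c$-fineness hypothesis to produce an optimal Monge map on the base. Admissibility of $m_Z$ together with $\mu_i \ll m_Z$ forces $\bar\mu_i \ll m_X$ on $\{f>0\}$, so by $c$-fineness there exists a measurable $T:X\to X$ realizing $\mathcal{T}_{\bar c}(\bar\mu_1,\bar\mu_2)$ with $T_\sharp \bar\mu_1 = \bar\mu_2$. Lift $T$ to $S:Z\to Z$ by $S([x,p]):=[T(x),p]$; this is well defined on the non-collapsed part where $\mu_1$ is supported. Spatial uniformity of $\mu_1$ and $\mu_2$, together with the intertwining identity $S\circ P = P\circ(T\times\mathrm{id})$, gives
\[
S_\sharp \mu_1 = P_\sharp\bigl( (T_\sharp \tilde\mu_1) \otimes m_Y \bigr) = P_\sharp(\tilde\mu_2 \otimes m_Y) = \mu_2,
\]
so $\pi_S := (\mathrm{id},S)_\sharp \mu_1 \in \Gamma(\mu_1,\mu_2)$. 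By the horizontal equality of distances,
\[
c([x,p],S([x,p])) = h\bigl(d_Z([x,p],[T(x),p])\bigr) = h(d_X(x,T(x))) = \bar c(x,T(x)),
\]
and integrating against $\mu_1$ yields $\int c\, d\pi_S = \int \bar c(x,T(x))\, d\bar\mu_1 = \mathcal{T}_{\bar c}(\bar\mu_1,\bar\mu_2)$, which produces the reverse inequality and completes the proof.

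The main obstacle is the technical bookkeeping around the collapse set $\{f=0\}$: the lifted map $S$ is only canonical on the non-collapsed part, and one must invoke admissibility of $m_Z$ in order to argue that $\mu_i$ places no mass on the collapse locus, so that the above identities hold $\mu_1$-a.e. The only other delicate point is to check that $\bar\mu_i$ lies in the class of measures to which $c$-fineness applies; this follows from $\mu_i \ll m_Z$ via pushforward, and from finiteness of $\mathcal{T}_c(\mu_1,\mu_2)$, which is automatic since the horizontal plan $\pi_S$ already exhibits a plan of finite cost given the compactness of $Z$.
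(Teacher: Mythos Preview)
Your proof is correct and follows essentially the same route as the paper's own argument: both use the $c$-fineness of the base to obtain an optimal Monge map $T$ on $X$, lift it horizontally to $S([x,p])=[T(x),p]$ on $Z$ to produce an admissible plan whose cost equals $\mathcal{T}_{\bar c}(\bar\mu_1,\bar\mu_2)$, and then project an arbitrary plan on $Z$ to $X$ via $\mathrm{pr}\times\mathrm{pr}$ together with the pointwise inequality $c\geq\bar c\circ(\mathrm{pr},\mathrm{pr})$ to establish optimality. The only cosmetic difference is that you organize the argument as two separate inequalities, whereas the paper first constructs the lifted plan and then proves its optimality directly; the geometric core (projection is $1$-Lipschitz, horizontal lifts are isometric) and the use of spatial uniformity to verify $S_\sharp\mu_1=\mu_2$ are identical.
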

\begin{proof}
Since $\mu_i (i=0,1)$ are absolutely continuous with respect to $m_Z$, we have $\tilde{\mu}_i \ll m_X$ ($i = 1,2 $). Set
	\[
	d \tilde{\mu}_i(x) = \tilde{\mu}_i(x) dm_X.
	\]
Then,
\[
1 = \int_Z \; d\mu_i = \int_{X \times Y} \;  \tilde{\mu}_i(x) \; dm_Xdm_Y = \vol(Y) \int_X \; d\tilde{\mu}_i, \quad i=1,2.
\]	
Let $\bar{\mu}_i$ be probability measures on $X$ given by the elements
\[
d \bar{\mu}_i = \vol(Y)\tilde{\mu}_i dm_X, \quad i=1,2.
\]
By definition, since $X$ is $c$-fine, there exists an optimal transport map, $T:X \to X$ between $\bar{\mu}_1$ and $\bar{\mu}_2$. Let $\bar{\pi}$ be the optimal plan for transporting $\bar{\mu}_1$ to $\bar{\mu}_2$ induced by $\bar{T}$ i.e. 
\[
\bar{\pi} = \left( \mathrm{id}_X \times \bar{T} \right)_\sharp \bar{\mu}_1.
\]
By absolute continuity of $\bar{\mu}_i$, it follows that $\bar{\pi}$ is absolutely continuous with respect to the product measure on $\tilde{Z}$ thus we write
\[
d \bar{\pi} = \bar{\pi} \; dm_X dm_X.
\]
Consider the map $\tilde{T}: \tilde{Z} \to \tilde{Z}$, given by
\[
\tilde{T} = \left( \bar{T}, \; \id_Y  \right)
\]
and set 
\[
\tilde{\pi} = \left(\id_{\tilde{Z}} \times \tilde{T} \right)_\sharp\left( \bar{\mu}_1 \otimes m_Y(Y)^{-1} m_Y \right), \quad \pi = (P, P)_\sharp \tilde{\pi}.
\]
By definition, the first marginal of $\tilde{\pi}$ is $\bar{\mu}_1 \otimes m_Y(Y)^{-1}m_Y$ and also for a product of measurable subsets $U \times V$ ($U$ and $V$ are measurable subsets of $X$ and $Y$ respectively), 
\begin{align}
\tilde{\pi} \left(  \tilde{Z} \times \left( U \times V \right)  \right) \notag &= \left( \bar{\mu}_1 \otimes m_Y(Y)^{-1}  m_Y \right) \left(  \tilde{Z} \times \left(  \bar{T}^{-1}(U) \times V \right)   \right) \notag  \\ &= m_Y(Y)^{-1} \bar{\mu}_2(U) m_Y(V) \notag \\ &= \tilde{\mu}_2\otimes m_Y \left( U \times V \right). \notag
\end{align}
Since 
\[
\tilde{\pi} \left( \tilde{Z} \times \left(U \times V\right)  \right)
\]
is multi-additive in $U$ and $V$ and the measures $m_X$ and $m_Y$ are $\sigma$-finite, by applying Carath\'{e}odory's extension theorem twice, we deduce there is a unique extension of $\tilde{\pi}$ to a measure on $\tilde{Z}$ and therefore for any measurable subset $A \subset \tilde{Z}$, we have
\[
\pi \left( \tilde{Z} \times A  \right) =  \tilde{\mu}_1\otimes m_Y \left( A \right).
\]
Since $\tilde{T}$ preserves the equivalence classes of $\sim$, it reduces to the map $T: Z \to Z$ given by
\[
T = P \circ  \left( \bar{T}, \; \id_Y  \right)  \circ P^{-1}.
\]
So,
\begin{align}
\label{EQN:Map}
\pi = \left( P, P  \right)_\sharp \tilde{\pi} &= P_\sharp \left(\id_{\tilde{Z}} \times \tilde{T} \right)_\sharp\left( \bar{\mu}_1 \otimes m_Y(Y)^{-1}m_Y \right) \notag \\ & = \left(  \id_Z \times \left(P \circ \bar{T} \circ P^{-1} \right)  \right)_\sharp P_\sharp \left( \bar{\mu}_1 \otimes m_Y(Y)^{-1}m_Y \right) \notag \\ & = \left( \id_Z \times  T  \right)_\sharp \mu_1.  \notag 
\end{align}
We claim $T$ is an optimal transport map between $\mu_1$ and $\mu_2$ and $\pi$ is an optimal plan. To do this, it is sufficient to only show $\pi$ is optimal. By construction, it is obvious that the first marginal of $\pi$ is $\mu_1$. Also, for $A \subset Z$, we have
\[
\pi \left( Z \times A \right) = \tilde{\pi} \left( \tilde{Z} \times P^{-1}(A) \right) = \tilde{\mu}_2\otimes m_Y \left( P^{-1}(A) \right) = \mu_2 \left( A \right).
\]
therefore, the second marginal is $\mu_2$. 
\par By construction of $\tilde{\pi}$, we know
\[
d\tilde{\pi}\left( (x,p), (y,q)   \right) = m_Y(Y)^{-1} \bar{\pi}(x,y)\; dm_Xdm_Y\; dm_Xdm_Y,
\]
and since all the cost functions are non-decreasing functions of distance, using the definition of the intrinsic distance on a warped product space and characterization of geodesics (see~\textsection\ref{Sec:Warped}), one gets
\[
c\left( [(x,p)], [(y,p)]   \right) = \tilde{c}\left( (x,p), (y,p) \right) = \bar{c}\left( x,y \right).
\]
where $\tilde{c}$ and $\bar{c}$ are respectively given by $h \circ \tilde{d}$ and $h \circ d_X$.
Therefore, total cost of the plan $\tilde{\pi}$ is
 \begin{align}
  \int_{\tilde{Z}^2}\; \tilde{c} \left( (x,p), (y,q)   \right) d\tilde{\pi} &= \int_{\tilde{Z}^2}\; \tilde{c} \left( (x,p), (y,q)   \right) d \left( \id_{\tilde{Z}} \times \tilde{T}  \right)_\sharp \left( \bar{\mu}_1 \otimes m_Y(Y)^{-1}m_Y   \right) \notag \\ &=  m_Y(Y)^{-1}\int_{\tilde{Z}}\; \tilde{c} \left( (x,p), \tilde{T}\left( x,p \right)  \right) \; d \bar{\mu}_1  d m_Y \notag \\ &=  m_Y(Y)^{-1} \int_{\tilde{Z}}\; \tilde{c} \left( (x,p), \left(  \bar{T}^{-1}(x), p   \right) \right) \; d \bar{\mu}_1  d m_Y  \notag \\ &=  m_Y(Y)^{-1} \int_{X} \int_Y \; \bar{c} \left( x, \bar{T}(x)   \right) d\bar{\mu}_1  \; d m_Y \notag \\ &= \int_{X} \; \bar{c} \left( x, \bar{T}(x)   \right) d\bar{\mu}_1 \notag
 \end{align}
which is the total cost of $\bar{\pi}$. Also, the total cost of the plan $\pi$ is
\begin{align}
\int_{Z^2}\; c \left( [(x,p)], [(y,q)]   \right) d\pi &= \int_{Z^2}\; c \left( [(x,p)], [(y,q)]   \right) d \left( id \times T  \right)_\sharp \mu_1 \notag \\ &= \int_{Z}\; c \left( [(x,p)], T\left( [(x,p)]   \right)  \right) d \mu_1 \notag \\ &= \int_{Z}\; c \left( [(x,p)], \left[ \left(\bar{T}(x), p \right) \right] \right) d P_\sharp \left( \tilde{\mu}_1  \otimes m_Y   \right) \notag  \\
&=  \int_{\tilde{Z}}\; \tilde{c} \left( (x,p), \left( \bar{T}(x), p \right) \right) d \left( \tilde{\mu}_1  \otimes m_Y   \right). \notag 
\end{align}
Therefore, the total costs of $\pi$, $\tilde{\pi}$ and $\bar{\pi}$ are the same. Now to prove optimality of $\pi$, let $\sigma$ be any other (absolutely continuous) admissible plan for the transport of $\mu_1$ to $\mu_2$ given by the element
\[
d\sigma = \sigma\left( [(x,p)], [(y,q)]   \right) dm_Z dm_Z.
\]
Let $\mathrm{pr}: Z \to X$ be the pushout of the projection map $\mathrm{Pr}: \tilde{Z} \to X$ by $P$ i.e.
\[
\mathrm{pr}\left( [(x,p)]  \right) = x
\]
and set $\bar{\sigma} = \left( \mathrm{pr}, \mathrm{pr}  \right)_\sharp \sigma$. The marginals of $\bar{\sigma}$ are $\bar{\mu}_1$ and $\bar{\mu}_2$ since for a measurable subset $U \subset X$,
\begin{align}
	\bar{\sigma} \left( U \times X  \right) &= \int_{\left(U \times X\right)}  \left( \mathrm{pr}, \mathrm{pr}  \right)_\sharp \sigma = \int_{P\left(U \times Y\right) \times P\left(X \times Y\right) } \; d\sigma  \notag \\ &= \mu_1 \left( P\left(U \times Y\right)  \right) = m_Y(Y) \tilde{\mu}_1 \left( U \right) = \bar{\mu}_1 (U)  \notag
\end{align}
and similarly $	\bar{\sigma} \left( X \times U  \right) = \bar{\mu}_2(U)$. So $\bar{\sigma}$ is an admissible plan for transporting $\bar{\mu}_1$ to $\bar{\mu}_2$. By optimality of $\bar{\pi}$, we have
\begin{align}
	\int_{Z^2} \; c\left(  [(x,p)], [(y,q)] \right) \; d\sigma &\ge \int_{Z^2} \;  \bar{c}(x,y) \;  \sigma\left( [x,p)], [(y,q)]   \right) dm_Z dm_Z \notag \\ &\ge \int_{X^2} \bar{c}(x,y) d\bar{\sigma}  \notag \\ &\ge \int_{X^2} \;  \bar{c}(x,y) \; d\bar{\pi} \notag \\ &= \int_{Z^2}\; c(x,y) \; d\pi \notag
\end{align}
hence, optimality of $\pi$. 
\end{proof}
\section{Set up of weak super Ricci flow}
\label{SEC:setup}
\subsection{Ricci flow pseudo-metric measure spacetime}
\label{subsec:RF-MMS-NP}
We will introduce the Ricci flow pseudo-metric measure spacetimes by essentially requiring the flow to stay within a (possibly singular) multiply twisted product regime. This is very reasonable framework especially for flows with symmetry; indeed for a rotationally symmetric spherical Ricci flow neckpinch continued by a possible rotationally symmetric smooth forward evolution, one expects any spacetime describing the flow to be of this form.
\begin{definition}[pseudo-metric measure spacetime of product form]\label{DEF:MMST}
	Let $\left(  \mathcal{X}, \mathbf{t}, d, m   \right)$ be a quadruple consisting of a complete metric measure space $\left(   \mathcal{X}, d, m    \right)$ and a surjective time function $\mathbf{t}: \mathcal{X} \to I$ in which $I$ is an interval. We say this quadruple is a \emph{pseudo-metric measure spacetime of the product form} if it isometrically splits as
	\[
	\left(   \mathcal{X}, d, m    \right) \equiv \left( I \times X', d_{Euc} \oplus d'_t,  L^1 \otimes m'_t \right)
	\]
	where $\left(  X', d'_t, m'_t  \right)$ is a time-varying pseudo-metric measure space with pseudo-distance function $d_{t'}$ and Borel measure $m'_t$. A Borel measure here is with reference to the $\sigma$-algebra generated by open balls with respect to pseudo-distance $d_{t'}$. We furthermore assume $d'_t(x,y)$ is continuous in $t$. We say this quadruple is an $(n+1)$-dimensional \emph{geometric spacetime of the product form} if $m'_t$ is the $n$-dimensional Hausdorff measure $\mathcal{H}^n_{d'_t}$. 
	\par The points of a metric measure spacetime of product form will be signified by pairs $(t,x')$. 
\end{definition}
\par From here onwards, a singular Riemannian metric $g_{sing}$ or pseudo-metric on a product manifold $M = B \times F_1 \times F_2 \times \dots \times F_r$ is referred to a \emph{non-negative} definite multiply twisted symmetric two tensor of the form
\[
g_{sing}= \phi^2 g_B \bigoplus_{i=1}^r \psi_i^2 g_{F_i}
\]
where the twisting functions $\phi$ and $\psi_i$ are all non-negative \emph{and apriori continuous and of bounded variation} functions defined on $M$ so that a pseudo-distance could be generated and $g_B$ and $g_{F_i}$ are respectively positive definite Riemannian metrics on the base manifold $B$ and on the fiber manifolds $F_i$. 
\par Let $\left(  \Omega, g   \right)$ be a Riemannian domain (open connected subset of some Riemannian manifold). In what follows, when the domain $\Omega$ is clear from the context, $d_g$ will denote the distance \emph{induced} by $g$ on $\Omega$.

\begin{definition}[regular flow points]
	Let $\left( t_0, x'  \right)$ be a spacetime point in a geometric spacetime of the product form. $\left( t_0, x'  \right)$ is said to be an $(n + 1)$-dimensional regular flow point if there exists radius $ r > 0$ and $\epsilon >0$  and a fixed subset $\mathcal{N} \subset  X'$ such that 
	\begin{enumerate}
	\item $\left( B^{d'_t}_{r}(x'),  d'_t\right)$ is a metric space i.e. $d'_t$ restricted to $B^{d'_t}_{r}(x')$ is a distance function.
	\item $\mathcal{N}$ is open in $\left(  X', d'_t  \right)$ for $\left| t - t_0 \right| < \epsilon$;
	\item $B^{d'_t}_{r}(x')$ is an $n$-dimensional open manifold containing $\mathcal{N}$ for $\left| t - t_0 \right| < \epsilon$;
	\item Restricted distance functions $d'_t\big|_{\mathcal{N}}$ are induced by Riemannian metrics $g'_t$ on $B^{d'_t}_{r}(x')$;
	\item The family $g'_t$ is $C^1$ in $t$. 
	\end{enumerate}
 The set of all $(n+1)$-dimensional regular flow points are denoted by $\mathcal{R}_{n+1}$. The top dimensional regular set is denoted by $\mathcal{R}$. For fixed $t\in I$, $\mathcal{R}_t$ denotes $t$-times slice of $\mathcal{R}$.  
\end{definition}
\begin{definition}[Ricci flow spacetime]\label{DEF:RFMMST}
	A (geometric) pseudo-metric measure spacetime of product form $\left(  \mathcal{X}, \mathbf{t}, d, m\right)$ is said to be a \emph{Ricci flow spacetime} whenever the following hold:
	\begin{enumerate}
		\item The corresponding product structure is of the form
		\[
		\left( I \times \underbrace{ B^n \times F^{m_1}_1 \times F^{m_2}_2 \times \dots  F^{m_r}_r}_{X'}, d_{Euc} \oplus d'_t,  L^1 \otimes \mathcal{H}^k_{d'_t}   \right)
		\]
		where $B$ and $F_i$ are compact manifolds possibly with boundaries and $k = n + \sum\limits_1^r m_i$.
		\item There exists a singular Riemannian metric $g_{sing}$ on $I \times X'$ of the form
		\[
		g_{sing} = dt^2 \oplus \phi^2 g_B \oplus  \psi_1^2 g_{F_1} \oplus \psi_2^2 g_{F_2} \oplus \dots \oplus \psi_r^2 g_{F_r}
		\]
		where $g_B$ and $g_{F_i}$ are non-degenerate Riemannian metrics. $\phi: I \times X' \to \R_{\ge 0}$ and $\psi_r: I \times X' \to \R_{\ge 0}$ are apriori only piecewise smooth in $x$ and $t$. 
		\item The pseudo-distance functions $d'_t$ are consistent with the product structure in the sense that for every fixed $t_0$, the distance $d'_{t_0}$ is locally generated by the singular Riemannian metric
		\[
		g_t = \phi^2 g_B \oplus  \psi_1^2 g_{F_1} \oplus \psi_2^2 g_{F_2} \oplus \dots \oplus \psi_r^2 g_{F_r}.
		\]
			\item On the regular set $\mathcal{R}$ consisting of top dimensional regular flow points, one has
		\[
		\frac{\partial g_t}{\partial t}  = - 2\Ric\left( g_t \right).
		\]
	\end{enumerate}
A point in a Ricci flow metric measure spacetime is denoted by $\left(t, x,p  \right)$ with $t \in I$, $x \in B$ and $p \in F_1 \times F_2 \times \dots \times F_r$. A \emph{Ricci flow spacetime} $\left(  \mathcal{X}, \mathbf{t}, d, m\right)$  is said to be a \emph{rotationally symmetric spherical spacetime} if  $B = [-1,1]$, $r=1$, $F_1 = \sphere^n$, $g_{F_1} = g_{\text{can}}$, and $\phi$ and $\psi_1$ are independent of $p$ (the twisted product is a warped product).
\end{definition}
\begin{definition}[Ricci flow spacetime through ``a'' pinch singularity]
We say a Ricci flow spacetime as in Definition~\ref{DEF:RFMMST} is a \emph{Ricci flow spacetime through a pinch singularity} if in addition,
	\begin{enumerate}
	\item There exists a singular time $T \in I$  such that the $\mathbf{t}^{-1}\left( (-\infty,T) \cap I \right)$ is a smooth manifold of dimension $1 + n+\sum_1^r m_i$, the time slices of which are smooth $n+\sum_1^r m_i$ dimensional manifolds with smooth Riemannian metrics $g_t$ (as in item 3 in Definition~\ref{DEF:RFMMST}). 
	\item There exists $x \in \mathring{B}$ and some $i\in \{1,2,\dots,r\}$ such that for all $t \ge T$ and all $p \in F$,  $\psi_i$ is zero at $\left(t,x, p \right)$.
	\item For all $i\in \{1,2,\dots,r\}$ and all $t\ge T$, the zero set of $\psi_i$ is non-increasing in $t$ (with respect to the subset order). This means no new singular point appears after time $T$ but singular points could possibly recover in time.
	\end{enumerate}
\end{definition}
Below, we give the definition of a rotationally symmetric spherical spacetime through a non-degenerate neckpinch singularity which are the spacetimes to which our main theorem applies.
\begin{definition}\label{DEF:RFSP-NP}
	Let $\left(  \mathcal{X}, \mathbf{t}, d, m\right)$  be a rotationally symmetric spherical metric measure spacetime through a pinch singularity. We say this quadruple is a {\bf rotationally symmetric spherical Ricci flow spacetime through ``a'' neckpinch singularity} if in addition
	\begin{enumerate}
		\item $\mathbf{t}^{-1}\left( (T, \infty) \cap I \right)$ is \emph{isometric} to the singular product spacetime
		\[
		\left(  \left((T, \infty)\cap I \right) \times [-1,+1] \times \sphere^n, g_{sing} = dt^2 \oplus \phi(t,x)dx^2 \oplus \psi(t,x)^2 g_{\sphere^n} \right),
		\]
		where for every $t \ge T$, the set $\psi > 0$ is the disjoint union of finitely many open (relative to $[-1,1]$) {\bf time-independent} subintervals $\left\{ I_\lambda\right\}_{\lambda \in \Lambda}$ of $[-1,+1]$. 
		
		\item For each $\lambda \in \Lambda$, and each $t>T$, the restricted Riemannian metric
	\[
	 \phi(t,x)dx^2 \oplus \psi(t,x)^2 g_{\sphere^n}, \quad x \in I_\lambda
	\]
		smoothly extends to a Riemannian metric, $\bar{g}^\lambda_t$ on the sphere $\sphere_{n+1}$ which depends smoothly on $t$ for $t>T$ and extends the Ricci flow equation i.e.
		\[
		\frac{\partial \bar{g}^\lambda_t}{\partial t}  = - 2\Ric\left( \bar{g}^\lambda_t \right), \quad \forall t>T.
		\]

	\item As $t \to T_+$ and on the regular set $\mathcal{R}$ consisting of top dimensional regular flow points, $g_t$ is a smooth forward evolution of Ricci flow. Namely, 
		\[
		 	\lim_{t \to T^+} g_t|_{\mathcal{O}} = g_T|_{\mathcal{O}}, \quad \text{in $C^\infty$-norm on open sets $\mathcal{O} $ with $\bar{\mathcal{O}} \subset \mathcal{R}_t$}.
		\]
notice that it follows from our hypotheses that $\mathcal{R}_t$ for $t \ge T$ does not depend on $t$.
	\end{enumerate}
\end{definition}
As we saw, a Ricci flow spacetime through a neckpinch singularity has a time-independent singular set. Some results hold under a weaker condition.
\begin{definition}[a non-recovering flow]\label{DEF:NONRECOV}
	Let $\left(  \mathcal{X}, \mathbf{t}, d, m\right)$ be \emph{Ricci flow spacetime} through a \emph{pinch singularity} (as in Definition \ref{DEF:RFMMST}). Define the time dependent $k$-th singular stratum
	\[
	\mathcal{S}_t^k := \left\{ x'=\left( x, y_1, \dots, y_r  \right) \quad | \quad \text{precisely $k$ of $\psi_i$'s vanish at $(t,x')$}  \right\}
	\]
	with the obvious iteration
	\[
	\mathcal{S}_t^1 \subseteq \mathcal{S}_t^2 \subseteq \dots \mathcal{S}_t^{r-1}\subseteq \mathcal{S}_t^r.   
	\]	
	The time dependent singular set is then
	\[
	\mathcal{S}_t := \mathcal{S}_t^1 \cup \mathcal{S}_t^2 \cup \dots \cup \mathcal{S}_t^{r}.
	\]
	We say the flow is weakly non-recovering if the time dependent singular set $\mathcal{S}_t $ is non-decreasing in $t$ namely if
	\[
	t_1 < t_2 \quad \Longrightarrow \quad \mathcal{S}_{t_1} \subseteq \mathcal{S}_{t_2},
	\]
	and strongly non-recovering whenever all $k$-th singular strata are non-decreasing in $t$. 
\end{definition}
It is immediate from Definitions~\ref{DEF:RFMMST} and \ref{DEF:NONRECOV} that a strongly non-recovering spacetime through pinch singularity has time-independent singular set for $t\ge T$. 
\subsection{The resulting time-dependent metric measure space}
There is a natural way to assign a time-dependent metric measure space to $\left(  \mathcal{X}, \mathbf{t}, d, m   \right)$. The time slices 
\[
\left(  X', d'_t, m_t   \right)
\]
are time-dependent pseudo-metric measure spaces and therefore the associated time-dependent metric measure space is
\[
\left( \frac{X'}{\sim_t},~{P_t}_\sharp d'_t,~{P_t}_\sharp m_t   \right).
\]
where $P$ is the quotient map corresponding to the equivalence relation
\[
x \sim_t y \iff d'_t(x,y) = 0.
\]
A priori, the underlying space in this time-dependent metric measure space also varies with time. 
\addtocontents{toc}{\setcounter{tocdepth}{1}}
\subsection*{Example}
If a rotationally symmetric spherical Ricci flow $\left( \sphere^{n+1}, g(t) \right)$ develops a non-degenrate  neckpinch singularity at the first singular time $T$ which satisfies the asymptotics given in~\cite[Table 1]{ACK}, then existence of a smooth forward evolution of Ricci flow for short time $\epsilon$ exists~\cite{ACK}. For a fixed such smooth forward evolution, ``an'' associated non-recovering rotationally symmetric spherical Ricci flow through a neckpinch singularity with time interval $[0, T+\epsilon]$ is then comprised of the following time slices
\begin{itemize}
	\item For $t<T$, time slices are just smooth Riemannian spaces$\left( \sphere^{n+1}, g(t) \right)$;
	\item For $t=T$, the time slice is the singular Riemannian space $\left( \sphere^{n+1}, g(T) \right)$;
	\item For $T < t < T+\epsilon$, the time slices are union of the smooth Riemannian spaces obtained from the smooth forward evolution and the singular set with time-dependent length given by
	\[
	g(t)_{\text{sing}}(x,t) = \phi_{\text{sing}}(t,x)dt.
	\]	
	Notice that our definition allows $\phi_{\text{sing}}$ to also evolve in time which leads to the change in length of the neck but the singular set itself stays unchanged. 
\end{itemize}
\addtocontents{toc}{\setcounter{tocdepth}{2}}
\subsection{Diffusions in time-dependent metric measure spaces}
\label{Sec:diffusions}
We wish to define weak super Ricci flows by the coupled contraction property. For this, we will need a theory of diffusions i.e. theory of solutions to conjugate heat equation in a time-dependent metric measure space. To this end, we will consider the time dependent Cheeger-Dirichlet energy functionals. There are some subtleties in applying the results of~\cite{Kopfer-Sturm} here. One is that we need our measures not to be with full topological supports since we want to allow for interval neckpinches, the other is whether the time-dependent Cheeger-Dirichlet energy functionals well-defined and then is the question of the conditions that need to hold so that the dynamic conjugate heat equation is well-posed. 
\par In what follows $\mathcal{E}_\tau$ is the Cheeger-Dirichlet energy functional on the $\tau$-time slice. The Cheeger-Dirichlet energy of $f \in L^2$ is simply the $L^2$-norm of the minimal upper gradient which coincides with pointwise Lipschitz constant of $f$ when $f$ is Lipschitz. 
\par In the setting of spherical Ricci flow spacetimes, we need to consider two prototype pictures for spherical Ricci flow neckpinch through singularity: interval pinching and single point pinching. In both cases the time slices do not satisfy weak Ricci curvature bounds in the sense of Lott-Sturm-Villani however, they satisfy measure contraction properties (see \cite{Sturm-HK}) with exceptional sets so one can define a time dependent Dirichlet form as in~\cite{Sturm-HK} which is local and its energy density is well-defined on the smooth parts coinciding with the Cheeger-Dirichlet energy density. Also since the time slices are infinitesimally Hilbertian, one can directly use the definition of the Cheeger-Dirichlet energy functional to see that for any $f \in W^{1,2}$ where $W^{1,2}$ denotes the domain of the Cheeger-Dirichlet form, the minimal weak upper gradient of $f$ equals $0$ outside the (time-independent) support of the time dependent measures and equals the norm of the distributional derivative on the support (which is a smooth manifold). Now, consider the framework set up in~\textsection\ref{SEC:Framework}. The following can be easily verified.
\begin{itemize}
	\item Distances in time slices satisfy the log-Lipschitz condition on time intervals of the forms $[0,T-\epsilon]$ and $[T+\epsilon, T']$ for any fixed $\epsilon > 0$ but the log-Lipschitz condition breaks down at the singular time. 
	
	\item Restricting the time-dependent metric measure space to the time interval $[0,T-\epsilon]$, we have a smooth Ricci flow with bounded curvature so obviously the conditions in \cite{Kopfer-Sturm} hold and also the unique solutions to dynamic conjugate heat equation are just the classical solutions. 
	
	\item Restricting the time-dependent metric measure space to the time interval $[T+\epsilon, T']$ and setting $\tau = T' - \tau$. Then for $0 \le \tau \le T' - T + \epsilon$, the distance on time slices satisfy the log-Lipschitz property and the logarithmic density $f_\tau$, is smooth on the smooth parts, can be chosen arbitrary outside the support and in case of one point pinching, it is discontinuous at the double point.
	
	\item In the case of interval pinching, the logarithmic densities $f_\tau$ (as functions in $W^{1,2}$) have versions that are smooth on the support and Lipschitz continuous on the whole time slice.

	\item In the case of single point pinching, the logarithmic densities $f_\tau$ (as functions in $W^{1,2}$) have versions that are smooth away from the double point and possibly discontinuous at the double point.
	
	\item The minimal weak upper gradient of $f \in L^2$ restricted to $M_1$ and $M_2$ (resp.) coincides with the minimal weak upper gradient of $f|_{M_1}$ in $M_1$ and of $f|_{M_2}$ in $M_2$ (resp.)
\end{itemize}
\par These properties imply that on the time intervals of the form $[0,T-\epsilon]$ and $[T+\epsilon, T']$, the flow satisfies the conditions on~\cite[Page 17]{Kopfer-Sturm} except for the fact that in the interval pinching case, on the time interval $[T+\epsilon, T']$, the measures do not have full topological support so solutions of the (weakened) conjugate heat equation \cite[Definition 2.4]{Kopfer-Sturm} can be prescribed arbitrarily on the neck interval. This means, starting from an initial $L^2$ funciton, diffusions exist but are not unique. In order to get a well-posed theory of diffusions, we will need to make an adjustment to the domain function space. These observations lead us to consider extended diffusion spaces. 
\subsubsection{Diffusions on extended time-dependent metric measure spaces}
\label{Sec:extended-dif}
Let $\left( X, d_\tau, m_\tau \right)$ be  a {\bf compact} time-dependent geodesic metric measure space. Suppose $d_\tau$ satisfies the log-Lipschitz condition (so the topologies do not change along the flow) and $m_\tau$ are mutually absolutely continuous which are not necessarily with full topological support. Suppose $\supp\left(  m_\tau  \right)$ is time-independent with {\bf time independent connected components} that are totally geodesic in $\left( X, d_\tau \right)$ for all $\tau$. These connected components are closed hence compact and there is a positive lower bound on distances between these connected components with respect to all $d_\tau$. The latter follows from the log-Lipschitz assumption. Let 
\[
L^2\left( \supp\left(  m_\tau  \right), m_\tau \right)  \hookrightarrow L^2\left( X, m_\tau   \right)
\]
be the compact Hilbert space embedding defined via extension of functions by zero. Denote the image by $\mathcal{H}$. Set $m_\diamond := m_0$ and let $\mathcal{E}_{\diamond}$ and $\Gamma_\diamond$ to be fixed background strongly local Dirichlet form and its associated square field operator. Set $\mathcal{F} := \mathit{Dom}\left( \mathcal{E}_\diamond \right) \cap \mathcal{H}$. In general $\mathcal{F}$ is only Banach. Now we can construct the function space $\mathcal{F}_{\tau_1, \tau_2}$ just as in \cite{Kopfer-Sturm}. Assume $m_\tau = e^{-f_\tau}m_\diamond$ with regularity assumption
\[
\Gamma_\diamond \left(  f_\tau \right) \le C,\quad \text{and} \quad  \forall x \in \supp\left( m_\diamond  \right),\quad \forall t.
\]
where uniform ellipticity condition on $\mathcal{E}_t$  and diffusion conditions on the square field operators $\Gamma_t$ hold (see \cite[Page 17]{Kopfer-Sturm}) hold for all $t$ and for all $x$ in the time-independent support of $m_\tau$. Then automatically the required conditions hold on each connected component of the support thus the heat and conjugate heat equations formulated as in \cite[Definitions 2.2 and 2.4]{Kopfer-Sturm}, are well-posed on $L^2 \left(  \mathcal{C}, m_\tau|_{\mathcal{C}}  \right)$. This in turn implies the well-posedness of dynamic heat and conjugate heat equation in the function space $\mathcal{F}_{\tau_1, \tau_2}$ in time-dependent metric measure space $\left( X, d_\tau, m_\tau \right)$. 
\par Such a time-dependent metric measure space equipped with its time-dependent Cheeger-Dirichlet forms and the function spaces as described above gives rise to a well-posed diffusion theory as discussed above and these diffusions will be called extended diffusions. 
\par The regularity of diffusions is determined by the regularity of the dynamic heat kernel via the duality relation~\cite[Theorem 2.5]{Kopfer-Sturm}. Of most important regularity type to us is the continuity of the dynamic heat kernel on the support since this is what differentiates interval pinching from single point pinching. As we will explain in below continuity issue is also hand in hand with irreducibility of the Cheeger-Dirichlet energy functional and validity of a Poincar\'e inequality on the support. 

\par The existence of dynamic heat kernel can be found in e.g.~\cite{Lions-Magenes, RR-PDE, Pazy} (see also~\cite{Lierl20}) and parabolic Harnack inequalities (which provide us the joint continuity of the dynamic heat kernel) can be found in~\cite{Sturm-DirIII, BGT}. The ingredients for the parabolic Harnack inequality are a doubling condition and a Poincar\'e type inequality. In the single point pinching of Ricci flow, the Poincar\'e inequality will not hold at the double point hence we can not deduce the continuity of the dynamic heat kernel in the time-independent support of time-dependent measures. Indeed, since the time dependent Cheeger-Dirichlet forms are reducible, a simple super position argument as in Lemma~\ref{LEM:keylemma} shows the kernel has jump discontinuity on the support. For more on deriving Parabolic Harnack inequality and properties of reducibile Dirichlet forms, we will refer the reader to~\cite{Sturm-DirI, Sturm-DirIII}. 
\subsubsection{Two types of extended diffusion spaces}
\begin{definition}[normal and rough extended diffusion spaces]
	Let $\left( X, d_\tau, m_\tau \right)$ be  a time-dependent compact geodesic metric measure space. Suppose $d_\tau$ satisfies the log-Lipschitz condition and $m_\tau$ are mutually absolutely continuous with time-independent totally geodesic (with respect to all $d_\tau$) connected components. Take the time zero data to be our background data. Assume the logarithmic densities with respect to $m_0$ (and therefore, w.r.t. any $m_\tau$) are uniformly Lipschitz in time and satisfy
	\[
	\Gamma_\diamond \left(  f_\tau \right) \le C, \quad \forall t,\quad \text{and} \quad  \forall x \in \supp\left( m_\diamond  \right), \quad \forall t.
	\]
	Suppose the Cheeger-Dirichlet energies are all defined for all times and satisfy the uniform ellipticity and diffusion properties on the time-independent support; see~\textsection\ref{Sec:extended-dif} and~\cite[Page 17]{Kopfer-Sturm}. Then the heat and conjugate heat equations as formulated in~\cite[Definitions 2.2 and 2.4]{Kopfer-Sturm} are well-posed in $\mathcal{H}$. We will refer to solutions of the conjugate heat equation in $\mathcal{H}$ as extended diffusions and to such a time-dependent metric measure space as an extended diffusion space. However for simplicity, we will drop the adjective ``extended'' from here on. 
	\par We will be concerned with two types of diffusion spaces:
\begin{itemize}
			\item A diffusion space in which the dynamic heat transition kernel is continuous in time and in each space variables, on the time-independent support, is called a {\bf normal diffusion space}. This is asking for extra so a normal diffusion space will be more regular with more regular diffusions on it. In particular, when the topology induced by the intrinsic metric on the time-independent support coincides with the time-independent topology induced by the time-dependent distance functions and when Poincar\'e inequality and a doubling condition holds, we get a continuous dynamic heat kernel~\cite{Sturm-DirIII}; see also~\cite{Lierl-18}.
		
			\item A diffusion space as described without further requirements on the regularity of the dynamic heat kernel is called a {\bf rough diffusion space}; namely, we have a rough diffusion space when the dynamic heat transition kernel $p\left(\tau_1,x,\tau_2,y\right)$ is apriori only in $L^2$ in terms of each space variable (with times and other space variable kept fixed). The diffusion spaces described in~\textsection\ref{Sec:extended-dif} all satisfy this property and hence are rough diffusion spaces; see~\cite[Proposition 6.3]{Lierl-18}. 
\end{itemize}
As we will see, existence of a single point pinch will cause the heat kernel to fail to be continuous in each space variable. 
\end{definition}
\subsubsection{Characterizing diffusion components in our setting}
\begin{lemma}\label{LEM:keylemma}
	With the notations of~\textsection\ref{SEC:Framework} and definitions of~\textsection\ref{Sec:extended-dif}, If $u_1(x,\tau)$ and $u_2(x,\tau)$ are smooth diffusions (resp. dynamic heat solutions) on $M_1$ and $M_2$, then 
	\[
	u(x,\tau) = u_1(x,\tau) \mathbf{1}_{M_1}(x) + u_2(x,\tau) \mathbf{1}_{M_2}(x)
	\]
	is in the unique (in $L^2\left( X \right)$) diffusion (resp. dynamic heat solution) with initial data
	\[
	u(x,0) = u_1(x,0) \mathbf{1}_{M_1}(x) + u_2(x,0) \mathbf{1}_{M_2}(x).
	\]
	To wit, the diffusions and dynamic heat solutions both evolve on $M_1$ and $M_2$ independently. In the interval pinching case, the heat transition kernel is continuous in time and space variables on the time-independent support but in the single point pinching case, the heat transition kernel is not continuous in space variables on the support (the continuity exactly breaks down at the double where $M_1$ and $M_2$ are joined).
Consequently, in the case of a single point pinching, the heat kernel is not continuous in space variables. 
\end{lemma}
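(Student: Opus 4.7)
The plan is to establish the superposition claim by exploiting the reducibility of the time-dependent Cheeger-Dirichlet form on the time-independent support, and then to extract the continuity dichotomy by feeding in a characteristic-function initial datum and reading off the jump.

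First I would check that the gluing $u(\cdot,\tau) = u_1(\cdot,\tau)\mathbf{1}_{M_1} + u_2(\cdot,\tau)\mathbf{1}_{M_2}$ lies in the admissible function space $\mathcal{F}_{\tau_1,\tau_2}$ of Section \ref{Sec:extended-dif}. The key input is the last bullet point in Section \ref{Sec:diffusions}: the minimal weak upper gradient of a function $f\in L^2$ restricted to $M_i$ coincides with the minimal weak upper gradient of $f|_{M_i}$ computed intrinsically on $M_i$. This localization yields
\[
\mathcal{E}_\tau(u) \;=\; \mathcal{E}_\tau^{M_1}(u_1) + \mathcal{E}_\tau^{M_2}(u_2),
\]
so the Cheeger-Dirichlet energies add, time-integrability is inherited, and $u\in\mathcal{F}_{\tau_1,\tau_2}$. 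Polarizing gives the corresponding bilinear splitting $\mathcal{E}_\tau(u,\phi) = \mathcal{E}_\tau^{M_1}(u_1,\phi|_{M_1}) + \mathcal{E}_\tau^{M_2}(u_2,\phi|_{M_2})$ for any test function $\phi\in\mathcal{F}_{\tau_1,\tau_2}$. Since $\phi|_{M_i}$ (extended by zero) is an admissible test function on $M_i$, and the $L^2(m_\tau)$-pairings split in exactly the same way, one concludes that $u$ solves the weak (conjugate) heat equation on $X$ in the sense of \cite[Def.\ 2.2, 2.4]{Kopfer-Sturm}. Uniqueness in $L^2(X)$ is then the well-posedness statement recorded at the end of Section \ref{Sec:extended-dif}.

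For the regularity dichotomy, I would test superposition against the particularly simple initial datum $u(\cdot,0) = \mathbf{1}_{M_1}$. On $M_1$ the resulting solution is the (smooth) mass-preserving diffusion starting at $\mathbf{1}_{M_1}$, while on $M_2$ it is identically $0$ by the superposition just proved. In the interval-pinching picture, $M_1$ and $M_2$ are separated by a closed singular interval of positive $d_\tau$-length for every $\tau$, so the two values never see each other and $u(\cdot,\tau)$ is continuous on $\supp(m_\tau)$; together with the standard parabolic regularity on each $M_i$ (doubling plus Poincaré, hence joint continuity of the intrinsic kernel as in \cite{Sturm-DirIII}) this yields the continuity of the full dynamic heat kernel. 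In the single point pinching case, however, $M_1$ and $M_2$ share the double point $x_\ast$; the value of $u(\cdot,\tau)$ at $x_\ast$ approached through $M_1$ equals the positive smooth diffusion on $M_1$, while approached through $M_2$ it equals $0$, producing a genuine jump.

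Finally, the duality relation between dynamic diffusions and the dynamic heat kernel (\cite[Theorem 2.5]{Kopfer-Sturm}) lets one transfer this jump of $u$ at $x_\ast$ to a jump of $p(\tau_1,\cdot,\tau_2,\cdot)$ in its spatial variable: if $p$ were continuous, then for each fixed $y\in M_2$ the map $x \mapsto \int p(0,z,\tau,x)\mathbf{1}_{M_1}(z)\,dm_0(z)$ would be continuous at $x_\ast$, contradicting the explicit two-sided limits just described. The main obstacle I expect is the first step: carefully justifying that the glued function $u$ lies in $\mathcal{F}_{\tau_1,\tau_2}$ and that the bilinear Dirichlet form splits across components, since this is what converts the reducibility of the energy into the superposition principle underlying everything else.
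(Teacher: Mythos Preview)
Your proposal is correct and the superposition half matches the paper's argument essentially verbatim: both localize the Cheeger--Dirichlet energy as $\mathcal{E}_\tau(f)=\mathcal{E}_\tau^{M_1}(f|_{M_1})+\mathcal{E}_\tau^{M_2}(f|_{M_2})$, verify the weak (conjugate) heat equation from this splitting, and invoke the well-posedness from \textsection\ref{Sec:extended-dif} for uniqueness.

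For the discontinuity in the single point pinching case the two arguments diverge slightly. The paper works directly with kernel values: for $y\in M_1\setminus\{p\}$ and $x\in M_2\setminus\{p\}$ the superposition forces $h(\tau_2,y,\tau_1,x)=0$, whereas positivity of the smooth dynamic heat kernel on each $M_i$ (via \cite{Guenther02}) gives $h(\tau_2,y,\tau_1,p)\neq 0$ and $h(\tau_2,x,\tau_1,p)\neq 0$; sending $x\to p$ through $M_2$ contradicts continuity. You instead produce a solution with a jump (initial datum $\mathbf{1}_{M_1}$) and then pull the discontinuity back to the kernel by duality. Both are valid; the paper's route is marginally more direct since it avoids the extra contrapositive step through \cite[Theorem~2.5]{Kopfer-Sturm}, while yours has the small advantage of also making explicit why the interval-pinching kernel \emph{is} continuous (a point the paper's proof leaves implicit).
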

\begin{proof}
In our setting, the Cheeger-Dirichlet energy $\mathcal{E}_t$ localizes to the smooth pieces $M_1$ and $M_2$ by which we mean
\[
\mathcal{E}_t (f) = \int_{M_1} \left| D \left( f |_{M_1}  \right) \right|_w \; dvol_{g_1(\tau)} +  \int_{M_2} \left| D \left( f |_{M_2}  \right) \right|_w \; dvol_{g_2(\tau)},
\]
where $|D \cdot|_w$ denotes the minimal upper gradient. Notice that since the underlying spaces are piecewise smooth, minimal upper gradients do not depend on the $L^p$ norm we use to minimize. So clearly $u$ satisfies the defining equations for conjugate heat solutions as in \cite[Definition 2.4]{Kopfer-Sturm} (resp. dynamic heat solutions as in \cite[Definition 2.2]{Kopfer-Sturm}). By uniqueness, this is the unique (in $L^2$) solution. 
\par To show the dynamic heat transition kernel is not continuous at the double point. Let $y \neq p$ be a point in $M_1$ and $x \neq p$ a point in $M_2$. Then for $\tau_1 < \tau_2$ and according to the part one in this lemma and since smooth dynamic heat kernel exists and is positive (see~\cite{Guenther02}), we have
\[
h\left( \tau_2,y,\tau_1,x \right) = 0, \quad h\left( \tau_2,y,\tau_1,p \right) \neq 0 , \quad h\left( \tau_2,x,\tau_1,p \right) \neq 0
\]
which upon letting $x$ or $y$ converge to $p$ will imply the heat transition kernel is not continuous at the double point $p$ (meaning, admits no continuous representative). 
\end{proof}
From here onwards, all spaces are assumed to be compact with no mention of topology (which might change in time) when the time-dependent topology is clear from context. 
\subsection{Weak Super Ricci flows associated to a cost $c$}
\label{Sec:WSRF}
By now we have the sufficient machinery to define weak super Ricci flows associated to a time dependent cost function $c_\tau$. The definition itself does not need any conditions on the cost function.
\begin{definition}[maximal diffusion components]
	Let $\left( X_\tau, d_\tau, m_\tau \right)$ be a time-dependent geodesic metric measure space. Suppose there is an interval $\left[  \tau_1, \tau_2 \right]$ on which $X_\tau = X$ and suppose $\mathcal{C} \subset X$ is totally geodesic in $\left( X, d_\tau \right)$ for all $\tau \in \left[ \tau_1, \tau_2 \right]$. We say $\mathcal{C}$ is a normal (resp. rough) diffusion component on time interval $\left[ \tau_1, \tau_2  \right]$ if the time-dependent metric measure space $\left( \mathcal{C}, d_\tau|_\mathcal{C}, m_\tau|_\mathcal{C}  \right)$, $\tau \in \left[ \tau_1, \tau_2  \right]$ is a normal (resp. rough) diffusion space. A maximal such diffusion component (with respect to inclusion along with restriction of distance and measure) will be called a maximal normal (resp. rough) diffusion component.
\end{definition}
We note that a maximal diffusion component on $\left[  \tau_1, \tau_2 \right] $ is maximal in space yet it could possibly be extended in time so the a maximal diffusion component is not necessarily maximal in time span.
\begin{proposition}
	Let $\left( \mathcal{X}, \mathbf{t}, d, m \right)$ be a spherical rotationally symmetric metric measure spacetime through a neckpinch singularity with singular time $T$. Suppose all the singularities at time $T$ are interval neckpinches. Then for any $T + \epsilon < T'$, $\mathbf{t}^{-1}\left( \left[ T+\epsilon, T'  \right]  \right)$ is a maximal normal diffusion component.
\end{proposition}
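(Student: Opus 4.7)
The plan is to verify the defining conditions of a normal diffusion space for the time-dependent metric measure space obtained by restricting $\mathcal{X}$ to the slab $\mathbf{t}^{-1}([T+\epsilon, T'])$, and then to observe that maximality is essentially automatic since the candidate component is already the entire underlying space on that time interval. First I would fix the data: by the non-recovering property built into Definition~\ref{DEF:RFSP-NP}, for $t > T$ the underlying quotient space $X$ is $t$-independent and decomposes as a finite union of smooth Riemannian caps (the closures of the connected components of $\{\psi > 0\}$) joined through finitely many one-dimensional neck intervals of positive length on which $\psi \equiv 0$.

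Since $[T+\epsilon, T']$ is bounded away from the singular time $T$, the smooth forward evolution provides a classical Ricci flow with bounded geometry on each cap, and the singular warping $\phi_{\text{sing}}(t,x)$ on the neck intervals depends smoothly on $t$. This yields at once: the log-Lipschitz condition for $d_\tau$; the mutual absolute continuity and time-independence of the supports of the Hausdorff measures $m_\tau$, which vanish on the one-dimensional neck intervals and coincide with the Riemannian volume on each $(n+1)$-dimensional cap; the Lipschitz-in-$\tau$ behavior of the logarithmic densities $f_\tau$ together with a uniform bound on $\Gamma_\diamond(f_\tau)$ via smoothness and compactness of $[T+\epsilon, T']$; and the uniform ellipticity and diffusion property of the time-dependent Cheeger-Dirichlet forms on the support. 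The support is the disjoint union of the caps, its connected components are the individual caps (separated by a uniform positive distance coming from the positive length of the intervening neck intervals), and each cap is totally geodesic in $(X, d_\tau)$ since any geodesic joining two points of a cap cannot shorten itself by entering the measure-zero neck region.

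The main obstacle, and what distinguishes the interval pinching case from single-point pinching, is the joint continuity of the dynamic heat transition kernel on the support. By Lemma~\ref{LEM:keylemma}, diffusions decouple into independent smooth diffusions on each cap; the classical smooth theory then provides a jointly smooth, positive dynamic heat kernel on each cap, while between distinct caps the kernel vanishes identically. Because the caps are the disjoint (both topologically and in terms of the support of $m_\tau$) connected components of the support and are separated by a uniform positive distance, this piecewise description patches together into a jointly continuous kernel on $\supp(m_\tau)$, promoting the space from a rough to a normal diffusion space. Finally, maximality in space is immediate: on $[T+\epsilon, T']$ the candidate $\mathbf{t}^{-1}([T+\epsilon, T'])$ already coincides with the entire restricted spacetime, so there is no strictly larger subset of $X$ to which we could extend while retaining the diffusion structure.
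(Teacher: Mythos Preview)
Your proposal is correct and follows essentially the same approach as the paper's proof: both reduce to the observation that the time-independent support consists of finitely many smooth closed pieces at positive mutual distance (because the neck intervals have positive length), invoke Lemma~\ref{LEM:keylemma} so that diffusions decouple into classical smooth diffusions on each piece, and conclude that the dynamic heat kernel coincides with the smooth kernel on each piece and patches continuously across the support. Your write-up is more thorough in explicitly checking the auxiliary hypotheses (log-Lipschitz distances, regularity of the logarithmic densities, uniform ellipticity, totally geodesic components), which the paper's argument largely takes for granted; one small wording issue is that your justification for total geodesy (``cannot shorten itself by entering the measure-zero neck region'') should instead appeal to the positive length of the neck, not to it being $\mathcal{H}^{n+1}$-null.
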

\begin{proof}
	It suffices to show the conjugate heat equation is well-posed in $\mathbf{t}^{-1}\left( \left[ T+\epsilon, T'  \right]  \right)$ in the extended function space and  the  dynamic heat kernel is continuous on the time-independent support. Set $\tau = T' - \epsilon$ then the resulting time-dependent metric measure space is 
	\[
		\left( \frac{\sphere^{n+1}}{\sim_\tau},~d_{g(\tau)},~\mathcal{H}^{n}_{d_{g(\tau)}}  \right).
	\]
This is easy to see since the time-independent support of the time-dependent metric measure space is comprised of two smooth pieces on which the time-dependent Cheeger-Dirichlet energy is well-defined and vanishes outside the support. The logarithmic densities are smooth, hence the dynamic conjugate heat equation is well-posed. Indeed, any diffusion with initial data $f(x) \in L^2\left( \supp(m_\tau) \right)$ is comprised of two diffusions evolving on separate smooth pieces. The dynamic heat transition kernel is unique so it must coincide with the smooth dynamic heat kernel on the smooth pieces. 
\end{proof}
\begin{proposition}
	Let $\left( \mathcal{X}, \mathbf{t}, d, m \right)$ be a spherical rotationally symmetric metric measure spacetime through a neckpinch singularity with singular time $T$. Then for any $ T + \epsilon < T'$, $\mathbf{t}^{-1}\left( \left[ T+\epsilon, T'  \right]  \right)$ is a maximal rough diffusion component but is not a maximal normal diffusion component. 
	\par Indeed, in presence of point pinching, a maximal normal diffusion component is a maximal totally geodesic component which does not contain the double points as its interior point. 
\end{proposition}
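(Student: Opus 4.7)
\medskip

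\noindent\textbf{Proof plan.} The strategy is to verify the two claims separately, using the framework of \textsection\ref{Sec:extended-dif} and the key Lemma~\ref{LEM:keylemma} as the main tools. First, I would check that $\mathbf{t}^{-1}\!\left([T+\epsilon,T']\right)$ satisfies every condition that defines a rough diffusion space: on this time window we are bounded away from the singular time $T$, so the warped-product distances $d'_\tau$ are smooth on each surviving piece, and the log-Lipschitz estimate for $d'_\tau$ holds componentwise. By Definition~\ref{DEF:RFSP-NP}, the support $\supp(m_\tau)$ is time-independent and splits as a finite disjoint union of smooth compact Riemannian pieces $\bigsqcup_{\lambda}(\sphere^{n+1}, \bar g^\lambda_t)$ (restricted to the corresponding $I_\lambda$), each totally geodesic in its time slice. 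Taking $m_\diamond := m_{T+\epsilon}$, the logarithmic densities $f_\tau$ of $m_\tau$ w.r.t.\ $m_\diamond$ are smooth on each piece with $\Gamma_\diamond(f_\tau)$ uniformly bounded, and the Cheeger--Dirichlet energies satisfy uniform ellipticity and the diffusion property on the support. This is exactly the hypothesis set of~\cite{Kopfer-Sturm} in the extended function space $\mathcal{F}_{\tau_1,\tau_2}$, so the dynamic heat and conjugate heat equations are well-posed on $\mathcal{H}=L^2(\supp m_\tau, m_\tau)$, yielding a rough diffusion structure. Maximality follows because enlarging the space beyond the full time slice is impossible, while shrinking it in time would contradict maximality in the time direction of the flow on $[T+\epsilon, T']$.

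To see that $\mathbf{t}^{-1}([T+\epsilon,T'])$ fails to be a maximal normal diffusion component, I would apply Lemma~\ref{LEM:keylemma} at a double point $p$ joining two smooth pieces $M_1$ and $M_2$ of the support. Let $y_0\in M_1\setminus\{p\}$ be fixed and $x\in M_2\setminus\{p\}$ variable. By the superposition formula in the lemma, the dynamic heat transition kernel satisfies
\[
h(\tau_2,x,\tau_1,y_0)=0 \quad \text{for all } x\in M_2\setminus\{p\},
\]
while $h(\tau_2,p,\tau_1,y_0)\neq 0$ since the diffusion starting from a delta-like mass at $y_0$ has positive value at $p$ via the $M_1$-component (or equivalently, by taking $x\to p$ from inside $M_1$, where $h$ agrees with the smooth positive heat kernel on $M_1$). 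Letting $x\to p$ inside $M_2$ yields $0$, giving jump discontinuity of $h$ at $p$ and violating the joint continuity on the support required of a normal diffusion space.

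For the characterization, both directions follow from the same dichotomy. If $\mathcal{C}$ is a normal diffusion component containing a double point $p$ in its interior, then the Lemma~\ref{LEM:keylemma} argument above, applied within $\mathcal{C}$, again produces a jump of the transition kernel at $p$, contradicting normality; hence no double point lies in the interior. Conversely, if $\mathcal{C}$ is a maximal totally geodesic subset that avoids double points as interior points, then each component of $\mathcal{C}\cap \supp(m_\tau)$ is a piece of a smooth Riemannian manifold, and the hypotheses of \textsection\ref{Sec:extended-dif} are inherited; crucially, the Cheeger--Dirichlet form is now irreducible on each connected piece and a doubling condition and Poincar\'e inequality hold by smoothness, so by~\cite{Sturm-DirIII} we obtain joint continuity of the dynamic heat kernel on $\supp m_\tau\cap \mathcal{C}$, and $\mathcal{C}$ is normal.

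The main obstacle I anticipate is the interior-versus-boundary distinction in the characterization: a double point may be admissible as a \emph{boundary} point of a maximal normal component (since continuity is only required on the support relative to the component), but not as an interior point. Making this dichotomy precise, and showing that the ``maximal totally geodesic, avoiding double points as interior points'' description is exactly what the normality requirement forces, is the delicate bookkeeping step; it ultimately rests on the observation that a diffusion cannot cross a double point, so normal components are in bijective correspondence with maximal smooth pieces of the support closed off at the pinch points.
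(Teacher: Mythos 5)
Your proposal is correct and follows essentially the same route as the paper: the well-posedness of the dynamic (conjugate) heat equation in the extended function space of \textsection\ref{Sec:extended-dif} gives the rough diffusion structure, and Lemma~\ref{LEM:keylemma} applied at a double point produces the jump discontinuity of the transition kernel that rules out normality and forces double points to lie outside the interior of any maximal normal component. The paper's own proof is just a terser version of this same argument.
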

\begin{proof}
	It suffices to show $\mathbf{t}^{-1}\left( \left[ T+\epsilon, T'  \right]  \right)$ is a maximal diffusion component i.e. the conjugate heat equation is well-posed and to show that dynamic heat kernel does not admit a continuous (in each space variable) representative. Since the time slices are comprised of smooth pieces joined with intervals or double points. The logarithmic densities are smooth on the smooth pieces yet they are not smooth at the double points. Nevertheless, Cheeger-Dirichlet energy is well-defined and based on Lemma~\ref{LEM:keylemma}, the dynamic transtition heat kernel coincides with the smooth dynamic heat transition kernels on the smooth pieces. Again from Lemma~\ref{LEM:keylemma}, it follows that a maximal normal diffusion component can not contain a double point in its interior. 
\end{proof}
\begin{definition}[diffusion atlas, exceptional set]
	Let $\left(  \mathcal{X}, \mathbf{t}, d, m   \right)$ be a {\bf pseudo-metric measure spacetime} of product form with its corresponding {\bf time-dependent metric measure space} 
	\[
	\left( X'_t, d'_t, m'_t\right).
	\]
	We say a spacetime point $\left( t,x'  \right)$ in $\mathcal{X}$ is swept by normal (resp. rough) diffusions whenever there exists $T$ so that setting $\tau = T-t$, one can find a maximal normal (resp. rough) diffusion component $\mathcal{C}$ (in the corresponding time-dependent metric measure space) on a time interval $\left[  \tau_1, \tau_2 \right]$ where $x'$ is in the interior of $C$ and $\tau$ is in the interior of $\left[  \tau_1, \tau_2 \right]$. 
	\par The exceptional set of the spacetime is the complement of the points swept by diffusions.
	 \par A collection of maximal normal (resp. rough) diffusion components $\left( \mathcal{C}, d_\tau|_\mathcal{C}, m_\tau|_\mathcal{C}  \right)$ sweeping $m$-a.e. point in the spacetime will be called a rough (resp. normal) diffusion atlas for the spacetime. 
\end{definition} 
\begin{definition}[weak (refined) super Ricci flow]
	\label{DEF:WSRF}
Let $\left( X_\tau, d_\tau, m_\tau \right)$ be an extended time-dependent geodesic metric measure space and $c_\tau$ a time dependent cost function. We say this is a {\bf weak (refined) super Ricci flow associated to time-dependent cost $c_\tau$} (or a (refined) $c$-$\mathcal{WSRF}$ for short) if there exists {\bf a} (rough) diffusion atlas on it such that for any two diffusions $\mu_1(\tau)$ and $\mu_2(\tau)$ supported in any maximal diffusion component $\mathcal{C}_{max}$ in the said atlas, the time-dependent optimal total cost
		\[
		\tau \mapsto \mathcal{T}^{~\mathcal{C}_{max}}_{c_\tau}(\mu_1(\tau), \mu_2(\tau))
		\]
		is non-increasing in $\tau$, on any interval where the right hand side is defined and is finite. 
\end{definition}
\begin{remark}
	Requiring a normal diffusion atlas, in general, means we are using smaller and more regular diffusion components on which it is easier for dynamic coupled contraction to hold on these components which in turn allows for less regular spacetimes. Notice that our definition of weak super Ricci flow asks for an atlas with the coupled contraction property and not assuming coupled contraction property for all diffusion atlases. In this sense, our definition is a very weak definition.   
\end{remark}
\begin{definition}[weak (refined) super Ricci flow spacetime]\label{DEF:WeakRicciFlow}
	A {\bf pseudo-metric measure spacetime of product form} $\left(  \mathcal{X}, \mathbf{t}, d, m   \right)$ is said to be a {\bf weak (refined) super Ricci flow spacetime associated to a cost function $c$} ($c$-$\mathcal{WSRF}$ spacetime for short) if the corresponding time-dependent metric measure space
	\[
	\left( X'_t, d'_t, m'_t    \right),
	\]
is a weak (refined) supper Ricci flow. 
\end{definition}
\addtocontents{toc}{\setcounter{tocdepth}{1}}
\subsection*{Examples}
	Any smooth Ricci flow gives rise to a weak (refined) super Ricci flow spacetime associated to any cost function $c_\tau = h \circ d_\tau$ where $h$ is convex, up until the first singular time. A time-dependent metric measure space time with mutually absolutely continuous Borel measures which is a super Ricci flow in the sense of Sturm~\cite{Sturm-SRF} (satisfies dynamic convexity of Boltzmann entropy) and which satisfies the regularity assumptions in Kopfer-Sturm~\cite{Kopfer-Sturm} produces a weak (refined) super Ricci flow spacetime associated to the squared distance cost. A spherical Ricci flow neckpinch starting from $g_0 \in \AK_0$ and continued by smooth forward evolution after the singular time $T$ is a weak super Ricci flow but {\bf is not} a weak refined super Ricci flow.
\addtocontents{toc}{\setcounter{tocdepth}{2}}
\section{Proof of main results}
\label{SEC:PROOFofTHM1.1}
As we saw in~\textsection\ref{SEC:OTRot}, the optimal total cost of transporting spatially uniform probability measures in a rotationally symmetric setting equates the optimal total cost of an optimal transport problem on $\R$ for which we have a rich theory. In this section we show how this fact can be used to prove our main theorem. 
\subsection{A closer look at our framework} 
\label{SEC:Framework}
To begin, we describe the framework for our argument as was set up in~\textsection\ref{SEC:setup} and specialize it to spherical Ricci flow through non-degenerate neckpinches. Suppose $\sphere^{n+1}$ is equipped with an initial metric 
\[
g_0 \in \mathcal{ADM} \supseteq \tilde{\AK} \supset{\AK} \supset  \AK_0
\]
, i.e., $g_0$ is an initial metric for which the flow developes a non-degenerate neckpinch singulrity and which can be continued beyond the singular time by a smooth forward evolution. In particular if $g_0 \in \AK$, it follows from~\cite{AK1} that a Type 1 neckpinch singularity develops through the Ricci flow at some finite time $T < \infty$. Ultimately, we aim to prove that the singularity which develops occurs at single points assuming the resulting Ricci flow flow spacetime through the neckpinch is a weak super Ricci flow associated to a/ any cost function of the form $c_\tau = h \circ d_\tau$. This means there exists a normal diffusion atlas (with continuous dynamic heat kernel). If the Ricci flow spacetime is a weak refined super Ricci flow assocated to $c_\tau$, we show the flow has to be smooth up until the final time slice. 
\par By way of contradiction, we assume instead that there is rotationally symmetric spherical Ricci flow spacetime through neckpinch singularities. For $t \in [0,T)$, $g(t)$ is a smooth metric on $(-1,1) \times \sphere^{n}$. At the singular time $t = T$, we view $(\sphere^{n+1}, g(T))$ as a metric space equipped with the singular Riemannian metric $g(T)$; or rather, possibly finitely many singular manifolds joined by line segments or joined at single point pinches (double points) where the singular metric is obtained from the neckpinch degenertation; see Figures~\ref{fig:intervalpinching} and~\ref{fig:onepointpinching}. We will only consider the case where the neckpinch happens on one interval. The general case can be dealt with similarly.  So we assume the neckpinch occurs on \textbf{an} interval of positive length in the interior of $[-1,1]$ i.e.
\[
\lim_{t \nearrow T} |\!\Rm\!| (x,t) = \infty, \quad  \forall x \in (x_1, x_2) \times \sphere^n \quad  \text{ where } \quad  -1 < x_1 < x_2 <1,
\]
This in particular means we are assuming the Ricci flow does not develop singularities at the poles corresponding to $x = \pm 1$. This assumption is a fact when the initial metric has positive curvature at poles; see \cite{AK1}. Being a neckpinch means any blowup limit of Ricci flow at any singular point is a shrinking cylinder soliton. However we will not use this so the arguments will apply to singularities of possibly other blow up limits which from a purely Ricci flow point of view, we do not expect to exist in the rotationally symmetric setting due to the assumptions we have made. We also assume the spacetime is a weak super Ricci flow associated to a cost function which is a convex function of distance. We aim to show these assumptions lead to a contradiction; see Figures~\ref{fig:intervalpinching} and~\ref{fig:onepointpinching} for the shape of time slices. 
 \begin{figure}
	\centering
	\includegraphics[width=3.2in, height=4.5in]{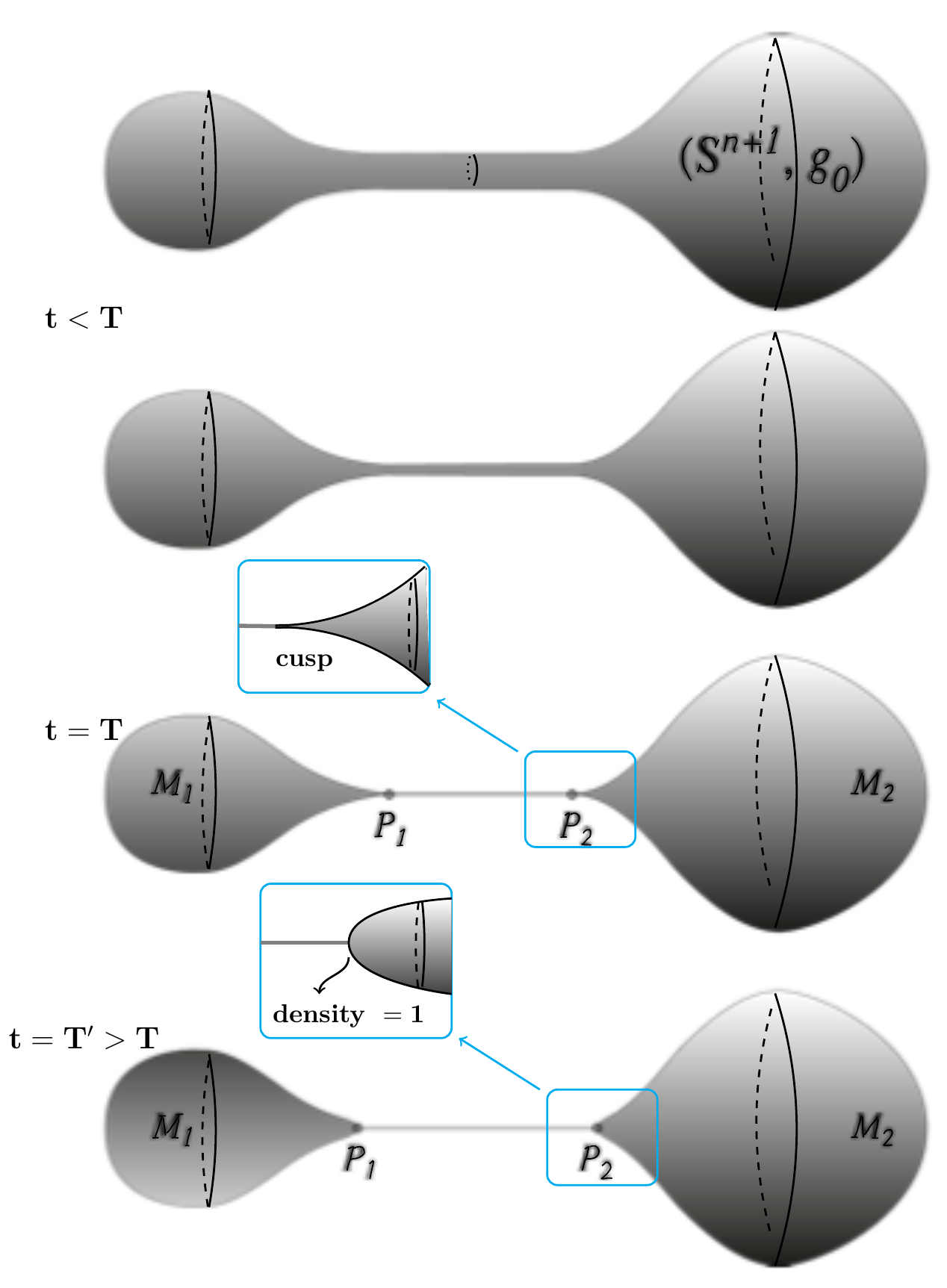}\\
	\caption{Flow through interval pinching is not a weak super Ricci flow associated to a convex cost.}
		\label{fig:intervalpinching}
\end{figure}
\par Let $M_1$ denote $\sphere^{n+1}$ with the singular metric $g_1$ arising through the Ricci flow as the limit of the Rimannian metrics $\left. g(t) \right|_{(-1, x_1) \times \sphere^n}$ as $t \nearrow T$. Similarly, let $M_2$ denote $\sphere^{n+1}$ with the singular metric $g_2$ which arises as the limits of metrics of $\left. g(t) \right|_{(x_2, 1) \times \sphere^n}$ as $t \nearrow T$. Label the points corresponding to $x_1$ and $x_2$, which are now the new poles of the degenerate spheres $M_1$ and $M_2$ by $P_1$ and $P_2$, respectively. 
\par We can write the metric space $(\sphere^{n+1}, g(T))$ as the union
\[
X :=\left( M_1 \sqcup M_2\right) \cup_h [0,1]
\]
which is obtained by first taking the disjoint union $\left( M_1 \sqcup M_2\right)$ and then identifying the boundary of $[0,1]$ to points in $\MM$ via a map $h: \{\{0\}, \{1\}\} \to M_1 \sqcup M_2$ where
\begin{align*}
	h(0) &= P_1 \in M_1 \\
	h(1) &= P_2\in  M_2.
\end{align*} 
We assume the interval joining $P_1 \in (M_1,g_1)$ to $P_2 \in (M_2, g_2)$ has length $L>0$, therefore the distance $d_X$ on $X$ is obtained from distances generated by $\left.g(T)\right|_{M_1}$ on $M_1$, $\left.g(T)\right|_{M_2}$ on $M_2$, and the one dimensional $L(x)d\lambda(x)$ on $[0,1]$ $d\lambda$ is the usual is the usual one-dimensional Lebesgue measure. Letting $d_g$ denote the distance metric induced by a smooth Riemannian metric $g$ and equipping $X$ with the distance $d_X$, it then follows that
\begin{equation}
\label{EQN:GHconvergenceBelow}
	\qquad \qquad \lim_{t \to T^-} d_{\text{GH}}\left((\sphere^{n+1}, d_{g(t)}),  (X,d_X)\right) =0.
\end{equation}
If we consider the Riemannian volume measures for $g(t)$ with $t<T$ and equip $X$ with the $(n+1)$-dimensional Hausdorff measure induced by $d_X$, the convergence is also true in measured Gromov-Hausdorff distance and in Intrinsic flat distance; see~\cite{lakzian-RF} for details. 
\par By hypothesis there exist at least one smooth forward evolution for this flow. Notice that the forward-time evolution of $M_1$ and $M_2$ is guaranteed and prescribed as in \cite[Theorem 1]{ACK} in the case where the initial metric is picked from the set $\AK_0$. Taking $g_i \; ( i=1,2)$ to be the initial singular metrics on $M_i$ obtained by restricting the singular metric $g(T)$, there exist complete, smooth forward evolutions $(M_i, g_i(t))$ for $t \in (T, T_i)$. We set  $T' := \min\{T+T_1, T+T_2\}$.
\par By definition of smooth forward evolution, the metrics $g_i(t)$ on $M_i$ (for $i = 1,2$) are smooth and converge smoothly to $g(T)|_{M_i}$ on precompact open sets in $M_i \setminus \{p_i\}$. Therefore, we have a one-parameter family of pseudo metrics $g(t)$ on $\sphere^{n+1}$ defined for $t \in [0, T')$ which are, in fact, smooth non-degenrate Riemannian metrics on all of $\sphere^{n+1}$ for $t \in [0, T)$ as well as on the open sets of $M_1$ and $M_2$ for $t \in (T, T')$. It is easy to check that this family of pseudo-metrics $g(t)$ induces the family of distance metrics $d_X(t)$ on the time slices of the correspinding time-dependent metric measure space via
\begin{equation}
\label{DEF:D metric}
d_X(t)(x,y) :=
\begin{cases}
d_{g_1(t)}(x,y)		& \text{if } (x,y) \in M_1 \times M_1,\\
d_{g_2(t)} (x,y)			& \text{if } (x,y) \in M_2 \times M_2,\\
d_{I,t} (x,y) := \left| \int_x^y\; L(t,u) du	\right|		& \text{if } (x,y) \in [0,1] \times [0,1],\\
d_{g_1(t)}(x, P_1) + d_{I,t} (0,1) + d_{g_2(t)}(y, P_2) 	& \text{if } (x,y) \in M_1 \times M_2,\\	
d_{g_i(t)}(x,P_i) +  d_{I,t} \left( p_i, y \right)		& \text{if } (x,y) \in M_i \times [0,1], ~i = 1,2.
\end{cases}
\end{equation}
It follows that
\begin{equation}
\label{EQN:GHconvergenceAbove}
\qquad \qquad \lim_{t \to T^+} d_{\text{GH}}\left( (\sphere^{n+1}, d_{g(T)}), (X,d_X(t)) \right) = 0.
\end{equation}
and again the convergence is even in the measured Gromov-Hausdorff and intrinsic flat sense; see~\cite{lakzian-RF}. 
\par Now assuming the rotationally symmetric spherical Ricci flow spacetime through singularity is also a weak super Ricci flo associated to a cost $c_\tau$, we can prescribe how the length of the interval joining $M_1$ and $M_2$ must evolve in a way that the flow satisfies our definition weak super Ricci flow; see Definition~\ref{DEF:WSRF}.
\par We have already characterized maximal diffusion components, so we will need to examine how the monotonicity of the optimal total cost of transporting diffusions with mass one on maximal diffusion components factor in. 
\par To wit, since the singularity is an interval pinching, maximal diffusion components include the pre-singular time components $\mathbf{t}^{-1}\left(0, T-\epsilon \right)$ and post-singular-time components $\mathbf{t}^{-1}\left(0, T+\epsilon \right)$ for any $\epsilon>0$. In the pre-singular-time maximal diffusion components, diffusions starting from a bounded measurable initial data will become smooth instantly. In post singular time components, the diffusions starting from a bounded measurable initial data supposrted on $M_1 \sqcup M_2$, will evolve separately on $M_1$ and $M_2$, whose restrictions to $M_1$ and $M_2$ become smooth instantly. In both cases mass one is preserved as well as being rotationally symmetric. In what follows we are considering the backward time parameter $\tau = T' - t$ and restrict ourselves to post-singular-time diffusion components. We will suppress the time parameter $\tau$ when the time-dependence is clear from the context. 
\par Following~\cite{AK1, AK2}, remove the poles of $M_i$ ($i = 1,2$) and identify the resulting doubly punctured sphere by $(-1,1) \times \sphere^n$ with $g_i = \phi_i^2(x) dx^2 + \psi_i^2(x) g_{\text{can}}$, where $\phi_i$ and $\psi_i$ are time-dependent smooth functions on $(-1,1)$. Choose the coordinate $r_i(x)$ to denote the distance to the north pole of $M_i$ and let ${r_i}_{~\text{max}}$ denote the distance from the north pole to the (singular) south pole, i.e., $r_{\text{max}} = \diam(M_i)$. Under this coordinate change we can write the metric $g_i$ as
\[
g_i = dr^2 + \psi_i(r)^2 g_{\text{can}}, \quad i=1,2
\]
where $\psi_i$ is $C^1$ on $(0,{r_i}_{\text{max}})$. Note that $\psi_1(0) = \psi(r_{\text{max}}) = 0$ and $\psi_1'(0) = 1$. 
\par Let $\mu(\tau)$ be a diffusion in a post-singular-time diffusion component. Then $\mu(\tau)$ is supported in $M_1\sqcup M_2$, denote the restrictions to $M_i$ by $\mu_i(\tau) d\vol_{g_i(\tau)}$ ($i = 1,2$). That is, $d\mu_i(\tau) = u_i(\,\cdot\,, \tau) \dvol_{g_i(\tau)}$ ($i = 1,2$) and $u_i$ satisfies the dynamic conjugate heat equation
\begin{equation}\label{EQ:CHE}
\frac{\del u_i}{\del \tau} = \Delta_{g_1(\tau)}u_i - R_{g_1(\tau)}u_i, \quad i = 1,2.
\end{equation}
\par In what follows we will drop the index $1$  from $u$ and $\mu$ when we are working only on $M_1$. 
\par Let $F(x,\tau)$ denote the cumulative distribution function of $\mu(\tau)$ on $M_1$ computed from the north pole of $M_1$ (the non-singular pole). $F(x,\tau)$ is defined on $[-1,1] \times (0, T_1)$ and given by
\[
F(x,\tau) = \int_{\sphere^n} \int_{-1}^{x} u(\,\cdot\,,\tau) \dvol_{g_1(\tau)}.
\]
Note that for any $\tau_0  \in (0, T_1)$, $F$ is a non-decreasing function on $[-1,1]$ with $F(-1,\tau_0) = 0$ and $F(1,\tau_0) = 1$. For the rest of these notes the smooth diffusions are assumed to be \emph{rotationally symmetric}. 
\begin{lemma}
	\label{LEM:u_r}
	Suppose rotationally symmetric $u$ and functions $ \psi_1$ and $F$ are as above. Then
	\[
	u_r = \frac{1}{\omega_n ({\psi_1})^n} \biggl( F_{rr} - n \frac{({\psi_1})_r}{{\psi_1}} F_r   \bigg) 
	\]
	where $\omega_n$ represents the volume of the unit sphere with the canonical round metric.
\end{lemma}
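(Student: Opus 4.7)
The plan is a straightforward computation exploiting rotational symmetry to reduce the defining integral for $F$ to a one-dimensional integral, and then differentiating twice.

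First, I would rewrite the volume measure on $M_1$ in warped product coordinates. In the coordinate $r$ (distance to the north pole), the metric $g_1 = dr^2 + \psi_1(r)^2 g_{\mathrm{can}}$ gives the volume element
\[
d\vol_{g_1(\tau)} = \psi_1(r,\tau)^n \, dr \wedge d\sigma_{\sphere^n},
\]
where $d\sigma_{\sphere^n}$ denotes the canonical volume form on the unit round sphere. Since $u(\,\cdot\,,\tau)$ is rotationally symmetric, i.e.\ depends only on $r$, the inner integral over $\sphere^n$ in the definition of $F$ produces the factor $\omega_n$, and after changing variables from $x$ to $r$ (along the unique ray from the north pole), the cumulative distribution becomes
\[
F(r,\tau) \;=\; \omega_n \int_{0}^{r} u(s,\tau)\,\psi_1(s,\tau)^n \, ds.
\]

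Second, I would apply the fundamental theorem of calculus in $r$ to obtain the first derivative
\[
F_r \;=\; \omega_n\, u\, \psi_1^n.
\]
Differentiating this identity once more in $r$ with the product rule gives
\[
F_{rr} \;=\; \omega_n\, u_r\, \psi_1^n \;+\; n\,\omega_n\, u\, \psi_1^{n-1}\,(\psi_1)_r.
\]
Substituting the relation $\omega_n u \psi_1^n = F_r$ into the second term on the right rewrites this as
\[
F_{rr} \;=\; \omega_n\, u_r\, \psi_1^n \;+\; n\,\frac{(\psi_1)_r}{\psi_1}\,F_r,
\]
and solving for $u_r$ yields the asserted formula.

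There is no real obstacle: the essential input is the rotational symmetry of $u$, which collapses the double integral defining $F$ to a single integral with weight $\omega_n \psi_1^n$, after which the identity is just two applications of the product rule. One should only be careful that $F$ is being viewed as a function of the intrinsic radial coordinate $r$ (rather than the original ambient parameter $x$), which is what is implicitly meant by the notation $F_r$ and $F_{rr}$ in the statement, and that the computation takes place at a fixed time $\tau$, so the time derivatives of $\psi_1$ play no role.
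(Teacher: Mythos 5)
Your proposal is correct and follows essentially the same route as the paper: both arguments hinge on the identity $F_r = \omega_n \psi_1^n u$ obtained from the warped-product volume form and rotational symmetry, after which the formula for $u_r$ follows by one more differentiation (you differentiate the product $F_r = \omega_n u \psi_1^n$, the paper differentiates the quotient $u = F_r/(\omega_n \psi_1^n)$ — algebraically the same step). No gaps.
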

\begin{proof}
	We compute
	\begin{align*}
		F_r(x,\tau) &= \frac{\partial }{\partial r}\int_{\sphere^n} \int_{-1}^x u(y, \tau) \dvol_{g_1(\tau)}  \\
		&= \frac{1}{{\phi_1}(x,\tau)}\frac{\partial }{\partial x} \int_{-1}^x u(y,\tau) \vol(\sphere^n)  {\phi_1}(y,\tau) ({\psi_1}(y, \tau))^n dy \\ 
		&= \omega_n ({\psi_1}(x,\tau))^n u(x,\tau). 
	\end{align*}
	Then
	\begin{equation}
		\label{EQN:uandFr}
		u = \frac{F_r}{\omega_n ({\psi_1})^n}
	\end{equation}
	and
	\[
	u_r = \frac{\left(\omega_n ({\psi_1})^n \right)F_{rr} - n \omega_n ({\psi_1})^{n-1} ({\psi_1})_r F_r}{\omega_n^2 ({\psi_1})^{2n}} = \frac{1}{\omega_n ({\psi_1})^n} \left( F_{rr} - n \frac{({\psi_1})_r}{{\psi_1}} F_r   \right).\qedhere
	\]
\end{proof}
\begin{figure}
	\centering
	\includegraphics[width=3.2in, height=4.5in]{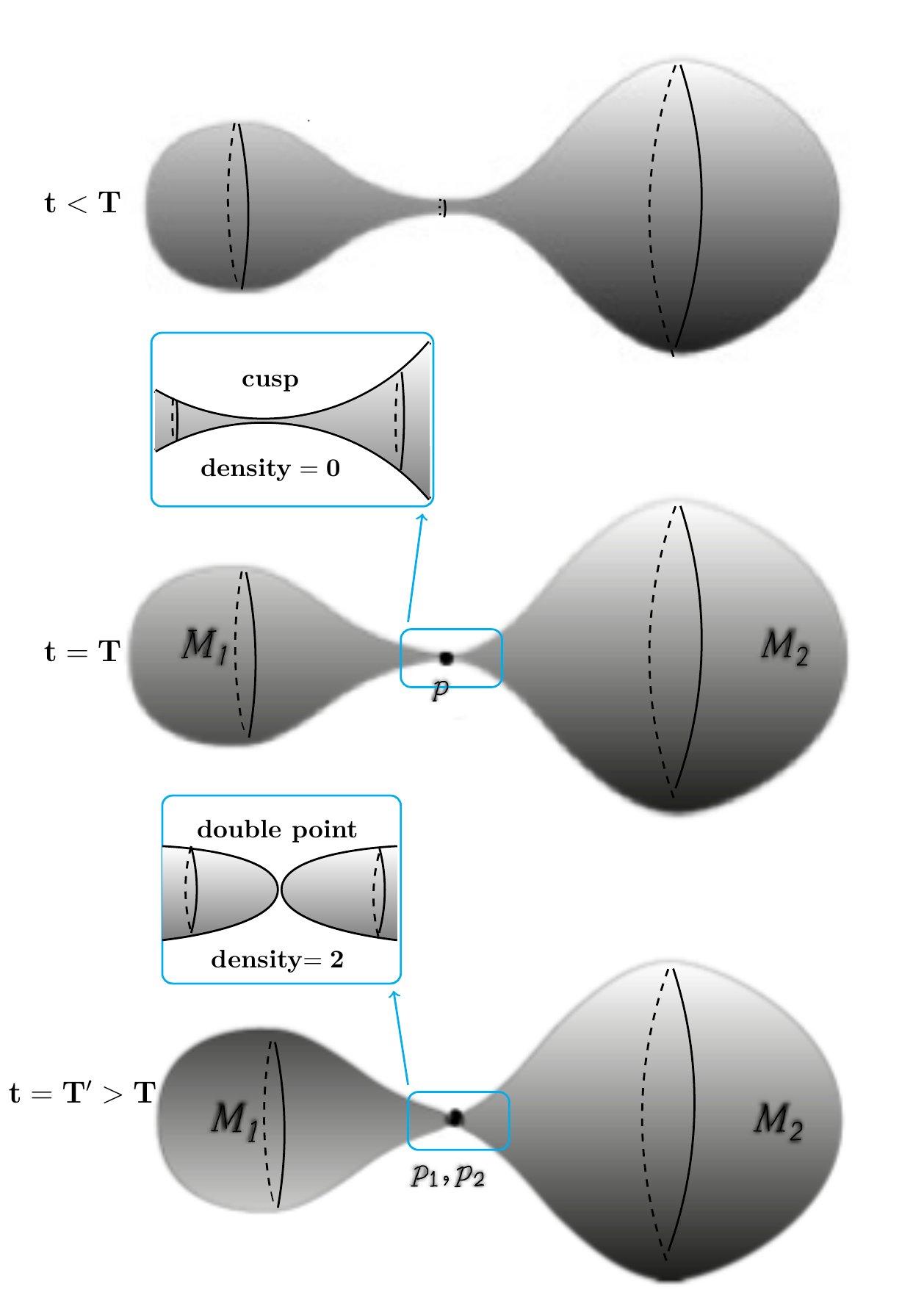}\\
	\caption{Flow through one point pinching continued as a double point is a weak super Ricci flow associated to any convex cost but is not a weak refined super Ricci flow.}
	\label{fig:onepointpinching}
\end{figure}
Next, we consider the $L^1$-norm of the cumulative distribution function $F(x, \tau)$ associated to the diffusion for $\mu(\tau)$. This will later be used in estimating the total optimal cost of transport between diffusions. Written in polar coordinates as $F(r, \tau)$,
\[
\mathcal{F_{\mu}}(\tau) := \int_0^{\diam(M_1)} F(r,\tau) dr.
\]
The main result of this section is to show that there exists a smooth diffusion measure $\mu(\tau)$ on $M_1$ such that the rate of change of $\mathcal{F}_{\mu}$ with respect to $\tau$ can be made arbitrarily large. This fact is indicative of the infinite propagation speed at time zero of parabolic evolution equations. 
\begin{proposition}
	\label{PROP:CDFBound}
	For any real number $N> 0$, there exists a diffusion $\mu_N(\tau)$ on $(M_1, g_1(\tau))$ for $\tau \in (0, T_1)$ such that 
	\[
	\dfrac{d}{d\tau} \mathcal{F}_{\mu_N} > N
	\] 
	for all sufficiently small $\tau>0$. 
\end{proposition}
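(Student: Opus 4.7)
The plan is to derive an explicit formula for $\tfrac{d\mathcal{F}_\mu}{d\tau}$ and then to construct $\mu_N$ by concentrating its initial density near the singular pole $P_1$ of $M_1$, exploiting the divergence of $\psi_{1,r}/\psi_1$ there. Writing $\mathcal{F}_\mu(\tau) = D(\tau) - \int_{M_1} r(p,\tau)\,d\mu(p,\tau)$ via Fubini (where $D(\tau) := \diam(M_1,g_1(\tau))$), I differentiate in $\tau$ using three ingredients: (i)~the conjugate heat equation (\ref{EQ:CHE}), which gives $\partial_\tau(u\,\dvol_{g_1(\tau)}) = \Delta u\,\dvol_{g_1(\tau)}$; (ii)~the backward Ricci-flow identity $\phi_{1,\tau} = -n(\psi_{1,rr}/\psi_1)\,\phi_1$ derived from (\ref{phipsi evolve}); and (iii)~the warped-product formula $\Delta_{g_1(\tau)} r = n\,\psi_{1,r}/\psi_1$. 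Writing $r_\tau(p,\tau) = -n\int_0^{r(p,\tau)}(\psi_{1,rr}/\psi_1)\,dr'$, a Fubini swap in $\int r_\tau\,d\mu$ and one integration by parts against $\Delta u$ give, after the free parts telescope with $D'(\tau) = -n\int_0^{D(\tau)}(\psi_{1,rr}/\psi_1)\,dr$, the identity
\[
\frac{d\mathcal{F}_\mu}{d\tau}(\tau) \;=\; -n\int_0^{D(\tau)} \frac{\psi_{1,rr}}{\psi_1}(r,\tau)\,F(r,\tau)\,dr \;-\; n\int_{M_1}\frac{\psi_{1,r}}{\psi_1}\,d\mu(\tau).
\]

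Next I control the two terms separately. Smoothness of $g_1(\tau)$ on $\sphere^{n+1}$ forces $\psi_1$ to be an odd function of the geodesic distance to either pole with $|\psi_{1,r}|=1$ at the poles; a Taylor expansion then yields $\psi_{1,rr}/\psi_1 = O(1)$ across both poles. Hence the first term is bounded in absolute value by $C_0 := n\|\psi_{1,rr}/\psi_1\|_\infty D(\tau)$, uniformly on any compact subinterval of $[0,T_1)$. To force the second term to be large, I use that near $P_1$, $\psi_1 \sim D(\tau) - r$, so $\psi_{1,r}/\psi_1 \sim -1/(D(\tau)-r)$. I take $u_N(\cdot,0)$ to be a smooth rotationally symmetric probability density supported in $[D(0)-\epsilon_N,\,D(0)]$ with $\epsilon_N>0$ small. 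The normalization $\omega_n \int_0^{D(0)} u_N\,\psi_1^n\,dr = 1$, combined with $\psi_1^n \sim (D(0)-r)^n$ on the support, forces $u_N \sim (n+1)/(\omega_n\epsilon_N^{n+1})$, and a direct scaling computation gives
\[
-n\int_{M_1}\frac{\psi_{1,r}}{\psi_1}\,d\mu_N(0) \;=\; -n\omega_n\int_{D(0)-\epsilon_N}^{D(0)} \psi_{1,r}\,\psi_1^{n-1}\,u_N\,dr \;\sim\; \frac{n+1}{\epsilon_N}.
\]

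Choosing $\epsilon_N$ small enough that $(n+1)/\epsilon_N - C_0 > N$ yields $\tfrac{d\mathcal{F}_{\mu_N}}{d\tau}(0) > N$. Since $u_N(\cdot,0)$ is smooth and (\ref{EQ:CHE}) has smooth coefficients on the smooth Riemannian manifold $(M_1,g_1(\tau))$ for $\tau \ge 0$, parabolic regularity produces a smooth solution $u_N(\cdot,\tau)$, and $\tau \mapsto \tfrac{d\mathcal{F}_{\mu_N}}{d\tau}(\tau)$ is continuous on $[0,\tau_0)$ for some $\tau_0>0$. The inequality $\tfrac{d\mathcal{F}_{\mu_N}}{d\tau}(\tau) > N$ therefore persists on a right-neighborhood of $\tau=0$, proving the proposition.

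The main technical obstacle is justifying the integration by parts $\int_{M_1} r\,\Delta u\,\dvol = \int_{M_1} u\,\Delta r\,\dvol$ at the poles of $M_1$, where the distance function $r$ is only Lipschitz and $\Delta r = n\psi_{1,r}/\psi_1$ blows up like $n/r$ near the north pole and $-n/(D-r)$ near the south pole. The saving grace is that this pointwise divergence is tempered by the vanishing of the volume density $\omega_n\psi_1^n\,dr \sim \omega_n r^n\,dr$ (resp.\ $\omega_n (D-r)^n\,dr$), so $\Delta r \in L^1(M_1)$ and the identity holds in the distributional sense against smooth test densities; a parallel argument handles the $\phi_{1,\tau}$ term at the endpoints $x=\pm 1$.
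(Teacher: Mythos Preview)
Your proof is correct and follows essentially the same approach as the paper: both derive the identity $\tfrac{d\mathcal{F}_\mu}{d\tau}=\int_0^{r_{\max}}\bigl(F_{rr}-n\tfrac{(\psi_1)_r}{\psi_1}F_r+F\,\Ric(\partial_r,\partial_r)\bigr)\,dr$ (your two-term version is this formula after observing $\int F_{rr}\,dr=0$ and rewriting $-n\int\tfrac{(\psi_1)_r}{\psi_1}F_r\,dr=-n\int_{M_1}\tfrac{\psi_{1,r}}{\psi_1}\,d\mu$), then concentrate the initial measure near the singular pole to make the $\psi_{1,r}/\psi_1$ term blow up like $1/\epsilon$, and finally invoke continuity in $\tau$ via the smooth conjugate heat kernel. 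Your route to the identity via $\mathcal{F}_\mu=D-\int r\,d\mu$, the warped-product formula $\Delta r=n\psi_{1,r}/\psi_1$, and a Fubini swap is a cleaner repackaging of the paper's direct computation with $F_r,F_{rr}$, but the substance and the construction of $\mu_N$ are the same.
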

\begin{proof}[\bf Proof of Proposition~\ref{PROP:CDFBound}]
	Let  $\mu(\tau)$ be any diffusion on $M_1$ with cumulative distribution function $F(x,\tau)$.  By the divergence theorem and evolution of volume form under Ricci flow, we get
	\begin{align}\label{dFdtau}
		\frac{\partial}{\partial \tau} F(x, \tau) 	& = \frac{\partial}{\partial \tau}\int_{\sphere^n} \int_{-1}^{x} u(\,\cdot\,,\tau) \dvol_{g_1(\tau)}. \notag \\
		&= \int_{{ \sphere^n}}  \int_{-1}^x \; \left(\frac{\partial u}{\partial \tau}+ R_{g_1(\tau)} u\right) \dvol_{g_1(\tau)} \notag \\ 
		&= \int_{{ \sphere^n}}  \int_{-1}^x \; \Delta_{g_1(\tau)} u ~\dvol_{g_1(\tau)} \\ 
		&= \int_{\{x\} \times { \sphere^n}} \; \left< \nabla u , {\bf n} _x \right> \dsigma \notag 
	\end{align}
	where ${\bf n}_x$ denotes the northward pointing unit normal to the hypersurface $\left\{x \right\} \times \sphere^n$ and $\mathrm{d}\sigma$ its area form.  Since the metrics $g_1(\tau)$ and the solution $u(x,\tau)$ are rotationally symmetric, we have $\n_x = \frac{\partial}{\partial r}=  \frac{1}{\phi_1} \frac{\partial}{\partial x}$ and $\nabla u = u_r \frac{\partial}{\partial r} = \frac{u_x}{(\phi_1)^2} \frac{\partial}{\partial x}$. Hence, 
	\[
	\frac{\partial}{\partial \tau} F(x, \tau) =\int_{\left\{ x \right\} \times \sphere^n} \; \frac{u_x}{(\phi_1)^3}\left| \frac{\partial}{\partial x} \right|^2 \dsigma = \int_{\left\{ x \right\} \times \sphere^n} \; \frac{u_x}{\phi_1} \dsigma = 	\frac{\partial}{\partial \tau} F(r, \tau) =\int_{\left\{ r \right\} \times { \sphere^n}}  u_r  \dsigma.
	\]
Setting $r_{\text{max}} = r(1) = \int_{-1}^1 \phi_1(y) dy =  \diam(M_1)$, and using the evolution equation for $\phi_1$ given by (\ref{phipsi evolve}), we have
	\begin{align*}
		\frac{d}{d\tau} \mathcal{F}_{\mu} = \frac{\partial}{\partial \tau} \int_{0}^{r_{\text{max}}}  F(r, \tau) dr 
		&= \frac{\partial}{\partial \tau} \int_{-1}^1  F(x,\tau)\phi_1(x, \tau) dx   \\ 
		&=  \int_{-1}^1    \left( \frac{\partial  F}{\partial \tau} \phi_1 +  F\frac{\partial \phi_1}{\partial \tau} \right)  dx \\ 
		&=  \int_{-1}^1    \left( \frac{\partial  F}{\partial \tau}+  \dfrac{F}{\phi_1}\frac{\partial \phi_1}{\partial \tau} \right)\phi_1 dx \\ 
		&=  \int_0^{r_{\text{max}}}   \left( \frac{\partial  F}{\partial \tau}+  \dfrac{F}{\phi_1}\frac{\partial \phi_1}{\partial \tau} \right)  dr \\
		&=  \int_0^{r_{\text{max}}}  \left( \frac{\partial  F}{\partial \tau}-  nF\dfrac{(\psi_1)_{rr}}{\psi_1} \right)  dr.
	\end{align*}
	To evaluate the ${\int_0^{r_{\text{max}}}  \frac{\partial  F}{\partial \tau} dr}$ term above we use (\ref{dFdtau}) and Lemma ~\ref{LEM:u_r}. Namely, 
	\begin{align*}
		\int_0^{r_{\text{max}}}  \;  \frac{\partial  F}{\partial \tau} dr
		& =  \int_0^{r_{\text{max}}}  \;  \left(\int_{\left\{ r \right\} \times {\sphere^n}}  u_r  \dsigma  \right) dr \\
		& =  \int_0^{r_{\text{max}}} \left( \int_{\sphere^n} \; u_r \psi_1^n \dvol_{\sphere^n}\right)  dr \\
		& =  \int_0^{r_{\text{max}}} \left( \vol(\sphere^n) u_r \psi_1^n \right)  dr \\
		& =  \int_0^{r_{\text{max}}} \left( F_{rr} - n\dfrac{(\psi_1)_{r}}{\psi_1}F_r \right)  dr. 
	\end{align*}
	In addition, by (\ref{Ricci}) it follows that 
	\[
	\ds{\Ric(\del r,\del r) = -n\dfrac{(\psi_1)_{rr}}{\psi}}.
	\] 
	Therefore, for any diffusion $\mu(\tau)$ on $M_1$
	\begin{equation}
	\label{EQN:ToMaximize}
	\dfrac{d}{d\tau} \mathcal{F}_{\mu} (\tau) = \frac{\partial}{\partial \tau} \int_{0}^{r_{\text{max}}}  F(r, \tau) dr = \int_0^{r_{\text{max}}} \biggl( F_{rr} - n\dfrac{(\psi_1)_{r}}{\psi_1}F_r +  F\Ric\left(\frac{\del }{\del r},\frac{\del }{\del r} \right) \bigg) dr.
	\end{equation}
	for all $\tau \in (0, T_1)$. This identity relates the time derivative to $\mathcal{F}_{\mu} (\tau)$ to spatial derivatives of $F$ and $\psi_1$. 
\par To complete the proof, we want to construct a diffusion for which the right hand side of \eqref{EQN:ToMaximize} is as large as we desire. The strategy is standard; first, we construct an initial data which makes the right hand side of \eqref{EQN:ToMaximize}  as large as we want; then, we flow this initial data under the dynamic conjugate heat flow for a short time. This way we obtain a diffusion and since the solution is given by convolution by a smooth conjugate heat kernel, the right hand side stay close to what it is at the initial time (by the standard facts from analysis, for example the derivative of convolution with the kernel is the equal to convolution with the derivative of the kernel, etc.)
\par Consider a measure that is concentrated on a small neighborhood near the singular pole (south pole is regular point for $M_1$ however in the spacetime it is where $M_1$ is connected to the neckpinch (interval) hence, singular in this sense). This measure, in a sense, approximates a Dirac mass concentrated nearby the initial singularity.  
\par Set  $\tau_0 = 0$. Recall, that $(M_1, g_1(0))$ (which in forward time is just $(M_1, g_1\left(T' \right))$) is a smooth Riemannian manifold and, by definition, 
	\[
	\psi_1(r_{\text{max}}, 0) = 0 \quad \text{and} \quad
	\lim_{r \to {r_{\text{max}}}^-}(\psi_1)_r = -1.
	\]
	Thus, since $\psi_1$ is $C^1$ on $[0, r_{\text{max}}]$, for any $\epsilon >0$ there exists $\delta_{\epsilon} >0$ such that 
	\[
	0 \leq \psi_1 \leq 2\epsilon \quad \text{ on } [r_{\text{max}} - 2\delta_{\epsilon}, r_{\text{max}}].
	\]
	Furthermore, since $\psi_1$ is smooth on $(0, r_{\text{max}})$, for any $\lambda >0$ there exists $\delta_{\lambda} >0$ such that 
	\[
	-1 - \lambda \leq (\psi_1)_r \leq -1 + \lambda \quad \text{for } r \in (r_{\text{max}} -2\delta_{\lambda}, r_{\text{max}}).
	\]
	Choose $\lambda$ so small that $\psi_1$ is monotone, strictly decreasing on $[r_{\text{max}} -2\delta_{\lambda}, r_{\text{max}}]$ and maps onto $[0, 2\epsilon]$. Take $\delta := \delta({\epsilon, \lambda})= \min(\delta_{\epsilon}, \delta_{\lambda})$ so that after making $\epsilon$ smaller if necessary, $\psi_1(r_{\text{max}} - 2\delta, 0) = 2\epsilon$ holds.
\par Let $\gamma$ be a non-negative cut-off function with $\supp(\gamma) \subset [0, 2\epsilon]$. Set 
	\[
	\gamma(\rho) = \begin{cases}
	1 &\quad \text{if } \rho \in [0, \epsilon],\\
	0 & \quad \text{if } \rho \geq 2\epsilon.
	\end{cases}
	\]
	Note that the image of $\psi_1\big|_{[r_{\text{max}} -2\delta_{\lambda}, r_{\text{max}}]}$ is contained entirely in the domain of $\gamma$ and $\psi_1(r_{\text{max}} - 2\delta, 0) = 2\epsilon$ and $\psi_1(r_{\text{max}}, 0) = 0$. Define
	\begin{align*}
		\beta_{\delta}(r) &:= \dfrac{\gamma(\psi_1) (\psi_1)_r}{\int_{r_{\text{max}} - 2\delta}^{r_{\text{max}}} \gamma(\psi_1) (\psi_1)^n (\psi_1)_r dr}\\
		&= \dfrac{\gamma(\psi_1) (\psi_1)_r}{\int_0^{2\epsilon} \gamma(\rho) \rho^n d\rho}, \quad \text{ letting } \rho = \psi_1(r).
	\end{align*}
	Note that $\beta_{\delta}$ is a smooth function with  $\supp(\beta_{\delta}) \subseteq [r_{\text{max}} - 2\delta, r_{\text{max}}]$. We  now define $F^{\delta}$ which we will take to be the cumulative distribution function for $\mu^{\delta}(0)$ ($\mu^{\delta}(0)$ will be determined later). For $r \in [0, r_{\text{max}}]$ define
	\[
	F^{\delta}_r(r,0) = \begin{cases}
	\beta_{\delta}(r) (\psi_1(r,0))^n & \text{ if } r \in [r_{\text{max}} -  2\delta, r_{\text{max}}],\\
	0			& \text{ otherwise.} 
	\end{cases}
	\]
	It remains to verify that $F^{\delta}(r, 0)$ is indeed a feasible cumulative distribution function. Note that for $r \in [r_{\text{max}} - 2\delta, r_{\text{max}}]$, 
	\[
	F^{\delta}(r, 0)  = \int_0^r F^{\delta}_{s} ds = \int_0^r \beta_{\delta}(s) (\psi_1(r', 0))^n ds = \dfrac{\int_0^r \gamma(\psi_1)(\psi_1)_{s} (\psi_1)^n ds}{\int_0^{2\epsilon} \gamma(\rho)\rho^n d\rho}.
	\]
	Thus,
	\[
	F^{\delta}(r, 0) =
	\begin{cases}
	\dfrac{\int_0^r \gamma(\phi_1)(\psi_1)_{s} (\psi_1)^n ds}{\int_0^{2\epsilon} \gamma(\rho)\rho^n d\rho} & \text{ if } r \in [r_{\text{max}} - 2\delta, r_{\text{max}}],\\
	0 																			& \text{ if } r \in [0, r_{\text{max}} -2\delta).
	\end{cases}
	\]
	Therefore, by construction $F^{\delta}(r,0)$ is non-decreasing function on $r$ and $F(0, 0) = 0$ and $F(r_{\text{max}}, 0) = 1$. We take $\mu^{\delta}(0)$ to be the unique rotationally symmetric and smooth probability measure with cumulative distribution function $F^{\delta}(r, 0)$ i.e. 
	\[
	 \mu^{\delta}(0) \left( r^{-1}\left(  \left[ r_1, r_2 \right]  \right)   \right) = F^{\delta}\left( r_2, 0 \right) - F^{\delta}\left( r_1, 0 \right).
	\]
\par Now wish to estimate the right hand side of \eqref{EQN:ToMaximize} for the probability measure $\mu^{\delta}(0)$. Starting with the first term inside the integral, since $\supp(\beta_{\delta}) \subseteq [r_{\text{max}} -2\delta, r_{\text{max}}]$, we have
	\begin{align*}
		\int_0^{r_{\text{max}}} F^{\delta}_{rr} dr & = F^{\delta}_r(r_{\text{max}}, 0) - F^{\delta}_r(0, 0) \\
		& = \beta_{\delta}(r_{\text{max}}) (\psi_1(r_{\text{max}}, 0))^n -  \beta_{\delta}(0) (\psi_1(0, 0))^n = 0.
	\end{align*}
	Next, for the third term of \eqref{EQN:ToMaximize}, since $|F^{\delta}| \leq 1$ by definition and since $(M_1, g_1(0))$ is compact and smooth,
	\[
	\bigg|\int_0^{r_{\text{max}}} F^{\delta} \Ric(\del r, \del r) dr \bigg| = \bigg|\int_{r_{\text{max}}-2\delta}^{r_{\text{max}}} F^{\delta} \Ric(\del r, \del r) dr \bigg| \leq 2\delta \sup_{r \in [r_{\text{max}} -2\delta, r_{\text{max}}]} \| \Ric\| < \infty.
	\]
	Lastly, it remains to evaluate the second term of \eqref{EQN:ToMaximize}. From the definition of $F^{\delta}_r$, 
	it follows that
	\allowdisplaybreaks[1]
	\begin{align*}
		\int_0^{r_{\text{max}}} -{n}\dfrac{(\psi_1)_r}{\psi_1}F^{\delta}_r dr &= \int_0^{r_{\text{max}}} -n(\psi_1)^{n-1}(\psi_1)_r\beta_{\delta} dr\\
		& = \int_0^{r_{\text{max}}} -n(\psi_1)^{n-1}(\psi_1)_r\dfrac{\gamma(\psi_1) (\psi_1)_r}{\ds{\int_0^{2\epsilon} \gamma(\rho) \rho^n d\rho}} dr\\
		& = \dfrac{\ds{\int_0^{r_{\text{max}}}  -n(\psi_1)^{n-1}((\psi_1)_r)^2 \gamma(\psi_1) dr}}{\ds{\int_0^{2\epsilon} \gamma(\rho) \rho^n d\rho}}\\
		& \geq(1-\lambda) \dfrac{\ds{\int_0^{r_{\text{max}}}  n(\psi_1)^{n-1}(\psi_1)_r \gamma(\psi_1) dr}}{\ds{\int_0^{2\epsilon} \gamma(\rho) \rho^n d\rho}}, \text{ since } -{(\psi_1)_r} \geq 1-\lambda\\
		& = (1-\lambda) \dfrac{\ds{\int_0^{2\epsilon} n\gamma(\rho) \rho^{n-1} d\rho}}{\ds{\int_0^{2\epsilon} \gamma(\rho) \rho^n d\rho}}\\
		& \geq (1-\lambda) \dfrac{\ds{\int_0^{\epsilon} n\gamma(\rho) \rho^{n-1} d\rho}}{\ds{\int_0^{2\epsilon} \gamma(\rho) \rho^n d\rho}}
		\\
		& \geq (1-\lambda) \dfrac{\ds{\int_0^{\epsilon} n \rho^{n-1} d\rho}}{\ds{\int_0^{2\epsilon}  \rho^n d\rho}}, \text{ by definition of } \gamma\\
		& = (1-\lambda) \dfrac{\epsilon^n}{\dfrac{(2\epsilon)^{n+1}}{n+1}}\\
		& = (1-\lambda)\dfrac{n+1}{2^{n+1}\epsilon}.
	\end{align*}
	Combining all of this we get
\[
 II(0) \geq (1-\lambda)\dfrac{n+1}{2^{n+1}\epsilon} - 2\delta \sup_{r \in [r_{\text{max}} -2\delta, r_{\text{max}}]} \| \Ric\|,
\]
where $II(0)$ is the right hand side of \eqref{EQN:ToMaximize} at time $\tau_0 = 0$ and for the probability measure $\mu^\delta(0)$.
Given any $N>0$ choose $\lambda_N >0$ and $\epsilon_N>0$ so small that 
\[
(1-\lambda_N)\dfrac{n+1}{2^{n+1}\epsilon_N} > N+1.
\]
By making $\lambda_N$ and $\epsilon_N$ smaller if necessary, we can assume
\[
\delta({\lambda_N, \epsilon_N}) \le \frac{1}{2} \left( \sup_r \|\Ric \| \right)^{-1}.
\]
Notice that $ \sup_r \|\Ric \|$ is always positive in our setting. So in conclusion
\[
 \int_0^{r_{\text{max}}} \biggl( F_{rr} - n\dfrac{(\psi_1)_{r}}{\psi_1}F_r +  F\Ric\left(\frac{\del }{\del r},\frac{\del }{\del r} \right) \bigg) dr > N.
\]
Set $\mu_N(0) := \mu^\delta(0)$. Let $\mu_N(\tau)$ be the solution to conjugate heat equation with initial data $\mu_N(0)$. Since the solution is obtained by convolution with a smooth conjugate heat kernel (see \cite{Guenther02}), it follows that $II(\tau)$ is continuous in $\tau$ so for $\tau$ sufficiently small, we have
\[
	\dfrac{d}{d\tau} \mathcal{F}_{\mu^{\delta}} (\tau) = II(\tau) > N.
\]
\end{proof}
\begin{remark}
	Alternatively, for the proof of Proposition~\ref{PROP:CDFBound}, we could have flown the delta measure at the south pole under the conjugate heat flow to obtain a smooth diffusion (the conjugate heat kernel based at the south pole) and then show largeness of $\dfrac{d}{d\tau} \mathcal{F}$ for the resulting solution by using (integrating) Li-Yau type differential Harnack estimates (there exist variety of them and all work for this purpose) for solutions to conjugate heat equation for $\tau$ small. 
\end{remark}
\subsection{Proof of Theorem ~\ref{THM:MAIN}}
\label{SEC:PROOF}
Consider $(X,d_X(\tau), \mathcal{H}_{d_X(\tau)})$ for  $\tau \in (0, T')$. The interval joining $M_1$ and $M_2$ exists for $\tau \in (0, T'-T]$ and has length $L(\tau)>0$. We will show that for any $\tau_0 \in (0, T'-T)$ 
\[
\frac{d^+ L}{d \tau}(\tau_0) = -\infty,
\]
by which we mean the upper Dini derivative of $L(\tau)$ has to be less than any negative number. To prove this we take appropriately chosen weak diffusions on $X$ supported on $M_1 \sqcup M_2$. From the definition of weak super Ricci flow, there exists a diffusion atlas comprised of maximal diffusion components. Given any two weak diffusions $\mu_1(\tau)$ and $\mu_2(\tau)$ in each of these diffusion components, 
\[
\tau \mapsto \mathcal{T}_{c_\tau}\left( \mu_1(\tau), \mu_2(\tau) \right)
\]
is non-increasing in $\tau$. 
\begin{theorem}
	\label{THM:dLdtauInfinite}
	Let $g_0 \in \mathcal{ADM}$ be an $\SO(n+1)$-invariant metric on $\sphere^{n+1}$ and $g(t)$ a solution to the Ricci flow starting at $g_0$ and which develops a interval neckpinch sinularity at $T< \infty$. Assume the resulting rotationally symmetric spherical Ricci flow spacetime is a (refined) weak super Ricci flow associated to a cost function $c_\tau = h \circ d_\tau$ for $h$ a strictly convex function or the identity. Consider the family of metric measure spaces $(X, d_X(t), \mathcal{H}_{d_X})$ defined in~\textsection\ref{SEC:Framework} consisting of a pair of disjoint manifolds $M_1$ and $M_2$ each evolving by the Ricci flow and joined by an interval of length $L(t) >0$ for $T\le t \le T'$  where $T'> T + \epsilon$ for some $\epsilon >0$. Set $\tau := T' - t$. For any $\tau_0 \in (0, \epsilon)$,
	\[
	\frac{d^+ L}{d \tau}(\tau_0) = -\infty.
	\]
\end{theorem}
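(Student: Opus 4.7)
The plan is to pick two diffusions supported on opposite ends of the neck so that, after the one-dimensional reduction of~\textsection\ref{SEC:OTRot}, the length $L(\tau)$ appears additively in the total transport cost; the weak super Ricci flow hypothesis then forces $L$ to decrease arbitrarily fast in $\tau$, via the infinite-propagation phenomenon of Proposition~\ref{PROP:CDFBound}.

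Fix $\tau_0 \in (0,\epsilon)$ and, after shifting the time origin to $\tau_0$, apply Proposition~\ref{PROP:CDFBound} on $M_1$ to obtain, for any prescribed $N>0$, a rotationally symmetric diffusion $\mu_N(\tau)$ supported on $M_1$ with $\tfrac{d}{d\tau}\mathcal{F}_{\mu_N}(\tau_0) > N$. On $M_2$ pick any fixed rotationally symmetric smooth diffusion $\nu(\tau)$. In the interval pinching regime, the time slab just after the singular time is a maximal (rough) diffusion component containing both diffusions, so by Definition~\ref{DEF:WSRF} the map $\tau \mapsto \mathcal{T}_{c_\tau}(\mu_N(\tau),\nu(\tau))$ is non-increasing.

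To compute the cost, observe that each time slice $(X, d_X(\tau))$ is the collapsed warped product over a base interval $B(\tau)$ of total length $R_1(\tau) + L(\tau) + R_2(\tau)$ with fiber $\sphere^n$ and warping function vanishing exactly on the middle neck sub-interval. The base is $c$-fine for the convex cost $h\circ|\cdot|$ (\textsection\ref{Sec:finespaces}), and both diffusions are spatially uniform in the fiber, so the theorem of~\textsection\ref{SEC:OTRot} identifies the cost with its one-dimensional projection. Writing $F, G$ for the projected cumulative distributions and noting that $\supp F \subseteq [0, R_1]$ lies entirely to the left of $\supp G \subseteq [R_1+L, R]$, formula~(\ref{1D-totalcost}) yields
\[
\mathcal{T}_{c_\tau}(\mu_N(\tau),\nu(\tau)) \;=\; \int_0^1 h\bigl(L(\tau) + A(s,\tau)\bigr)\,ds,
\]
where $A(s,\tau) = R_1(\tau) - F^{-1}(s,\tau) + \widetilde G^{-1}(s,\tau) \ge 0$. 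For the identity cost this collapses via~(\ref{W1 vs L1 dist}) to $\mathcal{F}_{\mu_N}(\tau) + L(\tau) + \widetilde{\mathcal{F}}_\nu(\tau)$, with $\widetilde{\mathcal{F}}_\nu(\tau)=\int(1-G)\,dr$; monotonicity of the total cost and smoothness of the two diffusions in $\tau$ then give
\[
\frac{d}{d\tau}\mathcal{F}_{\mu_N}(\tau_0) + \frac{d^+L}{d\tau}(\tau_0) + \frac{d}{d\tau}\widetilde{\mathcal{F}}_\nu(\tau_0) \le 0,
\]
so Proposition~\ref{PROP:CDFBound} immediately delivers $\tfrac{d^+L}{d\tau}(\tau_0) \le -N + C_\nu$, hence $-\infty$. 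For strictly convex $h$, the same differentiation procedure together with the identity $\int_0^1 F^{-1}(s,\tau)\,ds = R_1(\tau) - \mathcal{F}_{\mu_N}(\tau)$ reduces the problem to showing that the weighted integral $\int_0^1 h'(L+A)\,\dot A(s,\tau_0)\,ds$ grows at rate at least $cN$, where $c>0$ depends only on $h$, $L(\tau_0)$ and the diameter.

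The main obstacle is precisely this last transfer from integrated to weighted largeness in the strictly convex case. The natural route is to verify the pointwise monotonicity $\partial_\tau F^{-1}(s,\tau_0) \le 0$, equivalently that the diffusion $\mu_N$ of Proposition~\ref{PROP:CDFBound} initially increases every super-level set $\{F \ge s\}$, which follows from the concentrated initial profile forcing $\nabla u$ to point inward through (\ref{dFdtau}). Should a uniform-in-$s$ sign be delicate to justify, one can instead bound $\dot A(s,\tau_0)$ below by an $N$-independent constant using boundedness of $\dot R_1$ and $\partial_\tau \widetilde G^{-1}$, and then split the weighted integral into positive and negative parts; the positive part dominates because $\int_0^1 \dot A \, ds \ge N - C_\nu$ while the negative part is bounded independently of $N$, yielding $\tfrac{d^+L}{d\tau}(\tau_0) \le -cN + C'_\nu$ and, upon letting $N \to \infty$, the desired $\tfrac{d^+L}{d\tau}(\tau_0) = -\infty$.
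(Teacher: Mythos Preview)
For the identity (linear) cost your argument is essentially the paper's: reduce to a one-dimensional transport via the rotational symmetry of~\textsection\ref{SEC:OTRot}, express the $W_1$-cost as $L(\tau)+\mathcal{F}_{\mu_N}(\tau)+\widetilde{\mathcal{F}}_\nu(\tau)$ using~(\ref{W1 vs L1 dist}), differentiate, and invoke Proposition~\ref{PROP:CDFBound}. The only cosmetic difference is that the paper applies Proposition~\ref{PROP:CDFBound} on \emph{both} $M_1$ and $M_2$ (obtaining $\tfrac{d^+L}{d\tau}\le -2N$), whereas you apply it only on $M_1$ and keep a fixed diffusion $\nu$ on $M_2$ (obtaining $-N+C_\nu$); both suffice.

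For strictly convex $h$ the approaches diverge. The paper does not try to control the weighted integral $\int_0^1 h'(L+A)\,\dot A\,ds$ directly. Instead it argues that monotonicity of the $h$-cost for these diffusions forces monotonicity of the \emph{linear} cost: since the supports are separated by the neck, $|F^{-1}-\bar G^{-1}|\ge L(\tau)>0$ uniformly in $a$, so the weight $h'_\pm(|F^{-1}-\bar G^{-1}|)$ is pinched between two positive constants, which the paper uses to strip the weight and land back in the identity-cost case already handled. This buys a clean reduction with no need to track pointwise signs of $\dot A$.

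Your direct route has a genuine gap at precisely the place you flag. The ``fallback'' bound $\dot A(s,\tau_0)\ge -K$ with $K$ independent of $N$ is not justified: $\dot A=\dot R_1-\partial_\tau F^{-1}+\partial_\tau\tilde G^{-1}$, and while $\dot R_1$ and $\partial_\tau\tilde G^{-1}$ are $N$-independent, the term $-\partial_\tau F^{-1}$ comes from the $N$-dependent diffusion $\mu_N$. Implicit differentiation gives $\partial_\tau F^{-1}(s)=-u_r/u$ at $r=F^{-1}(s)$, so what you need is a one-sided log-gradient bound $u_r\ge -Ku$ uniform as the initial data concentrate, which is a Harnack-type statement you have not established. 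Your first route ($u_r\ge 0$ pointwise) is likewise not justified: the initial profile built in Proposition~\ref{PROP:CDFBound} has $u\propto\gamma(\psi_1)(\psi_1)_r$, and the cut-off $\gamma$ produces a region where $u$ is decreasing in $r$, so $u_r\ge 0$ fails already at $\tau=0$; no maximum-principle argument is offered to recover or propagate such a sign. The paper's reduction-to-linear manoeuvre avoids this difficulty altogether.
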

\begin{proof} 
	We first prove this in the case where the function $h$ is the identity function, namely for the distance cost function. Pick an arbitrary $N>0$. From Proposition ~\ref{PROP:CDFBound}, for any $\tau_0 \in (0, T'-T)$, we can find two smooth diffusions $\mu^N_1(\tau)$ and $\mu^N_2(\tau)$ on $X$ with $\supp(\mu^N_1(\tau)) \subset M_i$, both defined on an open interval containing $\tau_0$ and such that
		\[
		\dfrac{d}{d\tau} \mathcal{F}_{\mu_i^N} > N \quad \text{ for } i = 1,2
		\]
on a perhaps smaller open interval containing $\tau_0$. Note that by Lemma~\ref{LEM:keylemma},  $\supp(\mu^N_i(\tau)) \subset M_i$ and thus $\mu^N_1(\tau)$ and $\mu^N_2(\tau)$ have disjoint supports on $X$. 
\par Denote by $F(r,\tau)$, the cumulative distribution function of $\left( \operatorname{pr} \right)_\# \mu^N_1$ computed from the north pole of $M_1$ (the non-singular pole) and let $\bar{G}(r,\tau)$ denote the cumulative distribution functions of $\left( \operatorname{pr}\right)_\# \mu^N_2$ computed from the south pole of $M_2$ (the singular pole of $M_2$). Here, $\operatorname{pr}$ is projection onto the interval $\left[ 0 , \diam M_i  \right]$ (Indeed, it is the pushout of the projection map by quotient map; see~\textsection\ref{SEC:OTRot} for details). This interval is of course time-dependent yet bounded. The probability measures $\left( \operatorname{pr}_r \right)_\# \mu^N_i$ are on the real line. Indeed, it should be clear that we are describing the 1D cumulative distribution functions to have a configuration matching Figure~\ref{fig:distributions1} in order to use the 1D total optimal cost estimate. So now we have two time-dependent probability measures on the real line with uniformly bounded support so we can apply the estimates of optimal transport in 1D. By (\ref{W1 vs L1 dist}), we have
	\[
		\mathcal{T}_{c_\tau} \left( \mu^N_1(\tau) , \mu^N_2(\tau) \right) =  \| F(r,\tau) - \bar{G}(r,\tau) \|_{L^1(\R)}.
	\]
Furthermore, if we consider $G(r,\tau)$ to be the cumulative distribution of $\left( \operatorname{pr}_r \right)_\# \mu^N_2$ computed from the north pole of $M_2$, we obviously have the relation 
\[
\bar{G} \equiv 1 - G
\]
and since $\mu^N_1(\tau)$ and $\mu^N_2(\tau)$ have disjoint supports,
	\begin{align*}
		\mathcal{T}_{c_\tau} \left(\mu^N_1(\tau) , \mu^N_2(\tau), \tau \right) &= \int_\R   \left|  F(r,\tau)  -   \bar{G}(r,\tau)  \right| dr \\ 
		&=  L(\tau) + \int_{0}^{\diam(M_1)}    F(r,\tau) dr  +  \int_{0}^{\diam(M_2)}   G(r,\tau)  dr.  
	\end{align*}
	
\begin{figure}
	\centering
	\includegraphics[width=4.2in, height=3.2in]{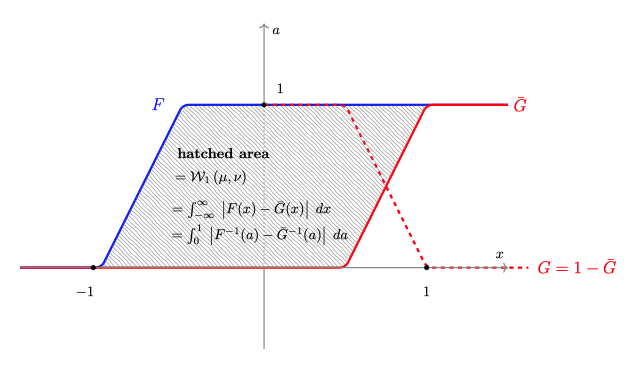}\\
	\caption{Relation between total linear cost and cumulative distributions $F$ and $\bar{G}$ in 1D.}
		\label{fig:distributions1}
\end{figure}		
Therefore,
\begin{align}
\frac{d^+}{d \tau} L  &= \frac{d^+}{d \tau}\left( \mathcal{T}_{c_\tau} \left(\mu^N_1(\tau) , \mu^N_2(\tau), \tau \right) -   \int_{0}^{\diam(M_1)}    F(r,\tau) dr -  \int_{0}^{\diam(M_2)}   G(r,\tau)  dr  \right) \notag \\ &\le \frac{d^+}{d \tau} 	\mathcal{T}_{c_\tau} \left(\mu^N_1(\tau) , \mu^N_2(\tau), \tau \right)\notag  \\ &+ \frac{d^+}{d \tau} \left( -  \int_{0}^{\diam(M_1)}    F(r,\tau) dr  \right) +  \frac{d^+}{d \tau} \left( -   \int_{0}^{\diam(M_2)}    G(r,\tau) dr  \right). \notag
\end{align}
Since the upper Dini derivatives appearing in the last line are indeed derivatives and since the total cost is non-increasing, we get
\begin{align}
\frac{d^+}{d \tau} L &\le \frac{d^+}{d \tau} 	\mathcal{T}_{c_\tau} \left(\mu^N_1(\tau) , \mu^N_2(\tau), \tau \right) - \frac{d}{d \tau} \int_{0}^{\diam(M_1)}   F(r,\tau) dr - \frac{d}{d\tau}  \int_{0}^{\diam(M_2)}  G(r,\tau)  dr  \notag \\ & \le - \frac{d}{d\tau} \mathcal{F}_{\mu^N_1} - \frac{d}{d\tau} \mathcal{F}_{\mu^N_2} \notag \\ &< -2N. \notag
\end{align}
Since $N>0$ was arbitrary, the conclusion follows. 
\par For a general (non-constant) convex cost function 
\[
c_\tau = h\left( d_\tau (x,y) \right),
\]
from (\ref{1D-totalcost}), we know the total cost equates
\[
\mathcal{T}_{c_\tau} = \int_0^1 \;\; h\left( \left|  F^{-1}_\tau (a) - \bar{G}^{-1}_\tau(a) \right| \right) \; da.
\]
Since $h$ is non-negative, convex and increasing, $h$ is almost everywhere differentiable, the one sided derivatives exist and are non-decreasing. Furthermore, 
for any $\epsilon>0$, there exists $l_\epsilon>0$ such that
\[
h'_{\pm}(x) \ge l_\epsilon, \quad \text{for $x \ge \epsilon$}.
\]
Now since the measures involved are instantly smooth and positive, the cumulative distributions are smooth, invertible and with smooth inverses; see Figure~\ref{fig:distributions2}. Thus, for any two diffusions, we can write
\begin{figure}
	\centering
	\includegraphics[width=4.2in, height=3.2in]{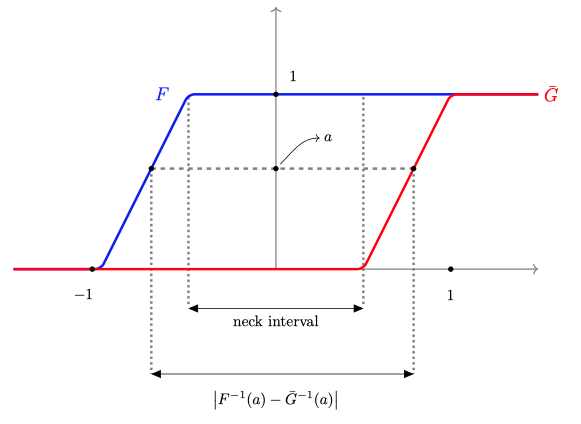}\\
	\caption{The quantity $\left|  F^{-1}(a) - \bar{G}^{-1}(a)  \right|$.}
	\label{fig:distributions2}
\end{figure}
\begin{align}
&\frac{d^\pm}{d\tau}\big|_{\tau = \tau_0} \;\;  \int_0^1 \; h\left(  \left|  F^{-1}_\tau (a) - \bar{G}^{-1}_\tau (a)  \right| \right) \; da \notag \\ &=  \int_0^1 \; h'_{\pm}\left(  \left|  F^{-1}_\tau (a) - \bar{G}^{-1}_\tau (a)  \right| \right)  \frac{d}{d\tau}\big|_{\tau = \tau_0}  \left|  F^{-1}_\tau (a) - \bar{G}^{-1}_\tau (a)  \right|  \; da \notag \\ &\ge l_\epsilon \frac{d}{d\tau}\big|_{\tau = \tau_0}  \int_{A_\epsilon} \;   \left|  F^{-1}_\tau (a) - \bar{G}^{-1}_\tau (a)  \right|  \; da \notag
\end{align}
where
\[
A^{\tau_0}_\epsilon := \left\{ a \in [0,1] ~|~   \left|  F^{-1}_\tau (a) - \bar{G}^{-1}_\tau (a)  \right|  \ge \epsilon \right\}.
\]
Therefore, we get
\[
\frac{d}{d\tau}\big|_{\tau = \tau_0}  \int_{A_\epsilon} \;   \left|  F^{-1}_\tau (a) - \bar{G}^{-1}_\tau (a)  \right|  \; da \le 0.
\]
Notice that $A^{\tau_0}_\epsilon$ is an open subset decreasing in $\epsilon$ and
\[
\lim_{\epsilon \to 0^+} A^{\tau_0}_\epsilon = [0,1]. 
\]
So we have
\begin{align}
\frac{d^+}{d \tau}\big|_{\tau = \tau_0}  \int_0^1 \;   \left|  F^{-1}_{\tau} (a) - \bar{G}^{-1}_{\tau} (a)  \right|  \; da &= \frac{d^+}{d \tau}\big|_{\tau = \tau_0} \left(  \lim_{\epsilon \to 0} \int_{A^{\tau_0}_\epsilon}\;   \left|  F^{-1}_{\tau} (a) - \bar{G}^{-1}_{\tau} (a)  \right|  \; da  \right) \notag \\ &= \limsup_{\epsilon \to 0}  \left( \frac{d^+}{d \tau}\big|_{\tau = \tau_0} \int_{A^{\tau_0}_\epsilon}\;   \left|  F^{-1}_{\tau} (a) - \bar{G}^{-1}_{\tau} (a)  \right|  \; da\right) \notag \\ &\le 0. \notag
\end{align}
which says the linear cost is nonincreasing under which we have already proven the claim.
\end{proof}
\begin{proof}[Proof of Theorem ~\ref{THM:MAIN}]
	Let $g(0) \in \mathcal{ADM}$  and $g(t)$ be a family of smooth metrics on $\sphere^{n+1}$ satisfying (\ref{EQN:RF}) for $t \in [0,T)$. By way of contradiction, assume that the neckpinch singularity that develops at $t = T$ occurs on an interval of finite length $L>0$. Adopting the framework set up in~\textsection\ref{SEC:Framework}, by Theorem ~\ref{THM:dLdtauInfinite} the quantity $\frac{\partial L}{\partial \tau}(\tau_0)$ can be made arbitrarily large and negative for any $\tau_0 \in (0, T'-T)$. That is to say, $\frac{\partial L}{\partial t}(t_0)$ can be made arbitrarily large for any $t_0 \in (T, T')$. However, this implies that the diameter of $X$ becomes unbounded as $t \to T$ contradicting \eqref{EQN:GHconvergenceBelow} and \eqref{EQN:GHconvergenceAbove}. 
To make this argument explicit, pick an arbitrary $\epsilon >0$. By  \eqref{EQN:GHconvergenceAbove} there exists  $\delta_{\epsilon}>0$ such that 
	\begin{equation}
	\label{EQN:ToContradict}
	d_{\text{GH}} \left((\sphere^{n+1}, d_{g(T)}),  (X,d_X(t))\right) < \epsilon
	\end{equation}
	for all  $t \in (T, T+\delta_{\epsilon})$. Now fix $t_0 \in (T, T+\delta_{\epsilon})$ and consider $(X,d_X(t_0))$. By assumption, $(\sphere^{n+1}, d_{g(T)})$ is compact since $L = L(0) >0$ is finite and thus $\diam(\sphere^{n+1}, d_{g(T)}) < \infty$. Therefore, from the definition of Gromov-Hausdorff distance; see, for example,~\cite[\textsection7.3]{BBI}, it follows that
	\begin{align*}
		d_{\text{GH}} \left((\sphere^{n+1}, d_{g(T)}),  (X,d_X(t_0))\right) &\geq \dfrac{1}{2} \bigl|\diam(\sphere^{n+1}, d_{g(T)})) -\diam(X,d_X(t_0))   \big|\\
		& = \dfrac{1}{2} \bigl| \diam(\sphere^{n+1}, d_{g(T)})) - \diam(M_1,g_1(t_0))  \\	
		&\phantom{= \dfrac{1}{2} \bigl|} - \diam(M_2, g_2(t_0)) - L(t_0)  \big|.
	\end{align*} 
	In addition, by Theorem ~\ref{THM:dLdtauInfinite} and taking the same $\epsilon >0$ as above, for any $N>0$ there is a sufficiently small neighborhood $U_{\epsilon,N}$ around $t_0$ such that 
	\begin{equation}
	\label{EQN:fromTHM}
	\biggl|\dfrac{L(t)-L(t_0)}{t-t_0} -N \bigg| < \epsilon
	\end{equation}
	for all $t \in U_{\epsilon, N} \setminus \{t_0\}$. Now take, for example, $t' \in U_{\epsilon, N}\cap (T, T+\delta_{\epsilon})$ with $t' > t_0$ and set
	\[
	N = \dfrac{1}{t'-t_0}\bigl(2\epsilon-\diam(\sphere^{n+1}, d_{g(T)}) + \diam(M_1, g_1(t_0)) + (\diam(M_2, g_2(t_0)) +L(t')\big) - \epsilon.
	\]
	Then, \eqref{EQN:fromTHM} implies that 
	\begin{align*}
		\epsilon < \dfrac{1}{2}\bigl(\diam(\sphere^{n+1}, d_{g(T)}) - \diam(M_1, g_1(t_0)) - \diam(M_2, g_2(t_0)) - L(t_0) \big)
	\end{align*}
	and thus
	\[
	d_{\text{GH}} \left((\sphere^{n+1}, d_{g(T)}),  (X,d_X(t_0))\right) > \epsilon
	\]
	which contradicts \eqref{EQN:ToContradict}. Therefore, if we require  that the diameter remains bounded, then the neckpinch singularity that arises at time $t = T$ must occur only at a single point; i.e., a hypersurface $\{x_0\} \times \sphere^{n}$, for some $x_0 \in (-1,1)$.\\
	The case where the neckpinch is single point pinching and under the assumption that the spacetime is weak super Ricci flow, the maximal diffusion components avoid the point neckpinches and so point neckpinches are allowed in weak super Ricci flows. 
	\par In the case where the neckpinch is single point pinching and under the assumption that the spacetime is weak refined super Ricci flow, maximal diffusion components include the single point neckpinches in their interior so an argument similar to the what we did above (based on the infinite propagation speed) will yield a contradiction again. 
\end{proof}
\subsection{Future directions}
\label{SEC:FUTURE}
\subsubsection{Asymmetric setting}
There does not yet exist a rigorous forward evolution out of a general (not necessarily rotationally symmetric) neckpinch singularity. However, the formal matched asymptotics found in~\cite[\textsection3]{AK2} predict that such evolutions should exist. The recent work on singular Ricci flows ~\cite{Kleiner-Lott} imply the existence of such smooth forward evolutions. 
\par As we have seen in the previous section, the main idea in proving the one-point pinching phenomenon using is to use the infinite propagation speed of heat type equations and the fact that heat does not travel along intervals. These facts do not require any symmetry at all. However, the other main ingredient which is reducing the computation of total cost to a 1D question uses symmetry in an essential way. So for the general case we need to get around this issue by perhaps writing the total cost as an action on geodesics which is given by a coercive Lagrangian and use an integrated Harnack inequality for diffusions instead to show the rate of change of total cost of transport between diffusions can be made arbitrarily large. 
\subsubsection{Singular Ricci flows}
The construction of spacetimes in \cite{Kleiner-Lott} is by basically by joining the spacetime slabs resulted from Ricci flow with surgery through the surviving parts. Then as the scale of surgery goes to zero, one obtains a singular Ricci flow. Since in the construction of the spacetime the singular part is thrown out one can not directly get a distance on the whole time slice at fixed post-surgery time $t_0$ however the distance survives in other ways, for example in a singular Ricci flow, there is a distance which comes from the spacetime smooth Riemannian metric and also there is Perelman's reduced $\mathcal{L}$-distance defined for the pre-surgery times however easily extendable to post-surgery times; the latter  is not really a distance function necessarily yet it shares similar properties. Then there is Topping's coupled contraction of renormalised Wasserstein distance with respect to the Perelman's reduced $\mathcal{L}$-distance under Ricci flow~\cite{Topping-L-OT} which can be used in the context of singular Ricci flows or even our Ricci flow metric measure spacetimes to show a one-point pinching result such as the one presented in these notes.

\vspace{5mm}

	\textbf{\textit{ \footnotesize “The moving finger writes, and having written moves on. Nor all thy piety nor all thy wit, can cancel half a line of it.” .... Omar Khayyam.}}


\bibliography{Neckpinch}
\bibliographystyle{amsplain}
\end{document}